\newcommand\redsout{\bgroup\markoverwith{\textcolor{red}{\rule[0.5ex]{2pt}{0.4pt}}}\ULon}
\definecolor{cite}{rgb}{0.00,0.60,1.00}
\definecolor{url}{rgb}{1.00,0.10,0.80}
\definecolor{link}{rgb}{0.00,0.00,1.00}
\DeclareFontFamily{U}{mathx}{\hyphenchar\font45}
\DeclareFontShape{U}{mathx}{m}{n}{
      <5> <6> <7> <8> <9> <10>
      <10.95> <12> <14.4> <17.28> <20.74> <24.88>
      mathx10
      }{}
\DeclareSymbolFont{mathx}{U}{mathx}{m}{n}
\DeclareMathAccent{\widecheck}{\mathalpha}{mathx}{"71}
\numberwithin{equation}{section}
\newtheorem*{theorem*}{Theorem}
\newtheorem{theorem}{Theorem}[section]
\newtheorem{lemma}{Lemma}[section]
\newtheorem{proposition}{Proposition}[section]
\newtheorem{corollary}{Corollary}[section]
\newtheorem{conjecture}{Conjecture}[section]
\newtheorem{remark}{Remark}[section]
\newcounter{roem}
\renewcommand{\theroem}{\Roman{roem}}
\newcommand{\c@org@eq}{}
\let\c@org@eq\c@equation
\newcommand{\org@theeq}{}
\let\org@theeq\theequation
\newcommand{\setroem}{
\let\c@equation\c@roem
 \let\theequation\theroem}
\newcommand{\setarab}{
\let\c@equation\c@org@eq
\let\theequation\org@theeq}
\newcommand{\ud}{\mathrm{d}}
\newcommand{\ue}{\mathrm{e}}
\newcommand{\Sym}{\mathrm{Sym}}
\newcommand{\kl}{\mathrm{Kl}}
\newcommand{\Frob}{\mathrm{Frob}}
\newcommand{\Tr}{\mathrm{Tr}}
\newcommand{\SL}{\mathrm{SL}}
\newcommand{\GL}{\mathrm{GL}}
\newcommand{\Gal}{\mathrm{Gal}}
\newcommand{\supp}{\mathrm{supp}}
\DeclareMathOperator{\Mod}{mod}
\renewcommand{\bmod}[1]{\,(\Mod{ #1})}
\newcommand{\bk}{\mathbf{k}}
\newcommand{\bF}{\mathbf{F}}
\newcommand{\bQ}{\mathbf{Q}}
\newcommand{\bZ}{\mathbf{Z}}
\newcommand{\btheta}{\boldsymbol\theta}
\newcommand{\cM}{\mathcal{M}}
\newcommand{\cP}{\mathcal{P}}
\newcommand{\fp}{\mathfrak{p}}
\newcommand{\fD}{\mathfrak{D}}
\newcommand{\fN}{\mathfrak{N}}
\newcommand{\blue}[1]{{\color{blue}#1}}
\definecolor{brown}{RGB}{165,42,42} 
\def\le{\leqslant}
\def\leq{\leqslant}
\def\ge{\geqslant}
\def\geq{\geqslant}
\begin{document}

\vglue -2mm

\title{Equidistribution of Kloosterman sums over function fields}

\author{Lei Fu}
\address{Yau Mathematical Sciences Center, Tsinghua University}
\email{leifu@tsinghua.edu.cn}

\author{Yuk-Kam Lau}
\address{
Weihai Institute for Interdisciplinary Research, Shandong University, China \mbox{\rm and}
Department of Mathematics, The University of Hong Kong, Pokfulam Road, Hong Kong}
\email{yklau@maths.hku.hk}

\author{Wen-Ching Winnie Li}
\address{Department of Mathematics, Pennsylvania State University,
University Park, PA 16802, USA}
\email{wli@math.psu.edu}

\author{Ping Xi}
\address{School of Mathematics and Statistics, Xi'an Jiaotong University, Xi'an 710049, P. R. China}
\email{ping.xi@xjtu.edu.cn}

\subjclass[2010]{11T23, 11L05, 11G40}

\keywords{Kloosterman sum, equidistribution, function fields, varieties over finite fields}


\begin{abstract}
We prove the Sato--Tate distribution of Kloosterman sums over function fields with explicit error terms, when the places vary in arithmetic progressions or short intervals. A joint Sato--Tate distribution of finitely many Kloosterman sums is also proved. The arguments in this paper also apply to local systems with $\mathrm{SL}(2)$ monodromy and suitable ramification restrictions.
\end{abstract}
\vglue -3mm
\maketitle

\setcounter{tocdepth}{1}

\section{Introduction and main results}

\subsection{Kloosterman sums over residue rings and finite fields}

The classical Kloosterman sum, attributed to Kloosterman's 1926 paper, is an exponential sum over a residue ring $\bZ/c\bZ$ with $c\in\bZ^+$. It appears on many occasions in modern analytic number theory due to its fruitful and deep applications.
There is another kind of Kloosterman sums, more rooted in algebraic geometry, defined over a finite field $\bF_q$ of characteristic $p$. When $c=q$ is a prime $p$, these two versions of Kloosterman sums coincide and are given by 
\begin{align*}
K(p,a):=\sum_{x\in(\bZ/p\bZ)^\times}\ue\Big(\frac{x+a/x}{p}\Big) 
\end{align*}
where $a\in\bZ$. 

A celebrated bound of Weil \cite{We48} asserts that
$|K(p,a)|\leqslant2\sqrt{p}$
for all primes $p$ and all $a\in\bZ$. Consequently $K(p,a)$, being real-valued, can be expressed as
\begin{align*}
K(p,a)=2\sqrt{p}\cos\theta_p(a)
\end{align*}
for some $\theta_p(a)\in[0,\pi]$.  Katz \cite{Ka88} showed that these angles $\theta_p(a)$ are equidistributed in the following sense.
\begin{theorem}[Katz, \cite{Ka88}]\label{thm:VST} 
The set $\{\theta_p(a):a\in\bF_p^\times\}$ is equidistributed in $[0,\pi]$ with respect to the Sato--Tate measure 
$\mathrm d\mu_{\mathrm{ST}}:=\frac{2}{\pi}\sin^2\theta\ud\theta$ as $p\rightarrow +\infty.$
\end{theorem}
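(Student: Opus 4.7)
The plan is to apply Weyl's equidistribution criterion to reduce the theorem to bounds on moments, which would then be established via Deligne's machinery of $\ell$-adic sheaves together with the monodromy of the Kloosterman sheaf.

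First, I would note that the Sato--Tate measure on $[0,\pi]$ is the pushforward of Haar probability measure on the compact group $\mathrm{SU}(2)$ along the map sending a conjugacy class to its eigenvalue angle. The irreducible representations of $\mathrm{SU}(2)$ are the symmetric powers $\Sym^k$ of the standard representation, whose characters are evaluated by the Chebyshev polynomials of the second kind $U_k(\cos\theta)=\sin((k+1)\theta)/\sin\theta$. By the Peter--Weyl theorem and Weyl's criterion, it therefore suffices to show that for every integer $k\geqslant 1$,
\[
\frac{1}{p-1}\sum_{a\in\bF_p^\times}U_k(\cos\theta_p(a))\longrightarrow 0\qquad(p\to+\infty).
\]

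Next, I would invoke Deligne's Kloosterman sheaf $\cK\ell_2$, a lisse $\overline{\bQ}_\ell$-sheaf of rank $2$ on $\mathbb{G}_{m,\bF_p}$ that is pure of weight $1$ and whose Frobenius trace at $a\in\bF_p^\times$ encodes $K(p,a)$; the Frobenius eigenvalues at $a$ are $p^{1/2}\mathrm{e}^{\pm i\theta_p(a)}$, so the Frobenius trace on $\Sym^k\cK\ell_2$ equals $p^{k/2}U_k(\cos\theta_p(a))$. Thus the moment sum above is, up to the normalization $p^{-k/2}$, a sum of Frobenius traces on the lisse sheaf $\Sym^k\cK\ell_2$ over the rational points of $\mathbb{G}_{m,\bF_p}$. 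The crucial input is Katz's theorem that the geometric monodromy group of $\cK\ell_2$ is the full $\SL_2$ for every odd prime $p$. From this it follows that $\Sym^k\cK\ell_2$ is geometrically irreducible and non-trivial for each $k\geqslant 1$, whence its invariants and coinvariants vanish, so $H^0_c=H^2_c=0$. The Grothendieck--Lefschetz trace formula then yields
\[
\sum_{a\in\bF_p^\times}\tr\bigl(\Frob_{p,a}\bigm|\Sym^k\cK\ell_2\bigr)=-\tr\bigl(\Frob\bigm|H^1_c(\mathbb{G}_{m,\overline{\bF_p}},\Sym^k\cK\ell_2)\bigr),
\]
and Deligne's Riemann Hypothesis, combined with an Euler--Poincar\'e bound on $\dim H^1_c(\mathbb{G}_{m,\overline{\bF_p}},\Sym^k\cK\ell_2)$, produces an estimate of order $O_k(p^{(k+1)/2})$. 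Dividing by $(p-1)p^{k/2}$ yields a saving of size $O_k(p^{-1/2})$, which tends to $0$ and establishes the required moment convergence.

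The main obstacle is Katz's monodromy computation itself. Proving that the geometric monodromy of $\cK\ell_2$ is the full $\SL_2$, rather than some proper algebraic subgroup, requires a careful local analysis: one shows that $\cK\ell_2$ is tame at $0$ with a unipotent non-trivial pseudo-reflection as local monodromy, and totally wild at $\infty$ with Swan conductor $1$, and then applies the classification of closed subgroups of $\SL_2$ that can accommodate such local structures. Once the monodromy is pinned down, the remaining ingredients---Weyl's criterion, the Lefschetz trace formula, and the Weil II bound---are routine.
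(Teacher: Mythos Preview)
The paper does not prove Theorem~\ref{thm:VST}; it is stated as background and attributed to Katz~\cite{Ka88}. Your proposal is correct and is precisely Katz's original argument: Weyl's criterion reduces equidistribution to moment estimates, the moments are computed as Frobenius traces on $\Sym^k\mathrm{Kl}$, the $\SL_2$ monodromy forces the vanishing of $H^0_c$ and $H^2_c$, and the Grothendieck--Lefschetz trace formula together with Deligne's Weil~II bound on $H^1_c$ gives the required $O_k(p^{-1/2})$ saving.

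It is worth noting that the paper's own results (Theorems~\ref{thm:STinSc}, \ref{thm:shortinterval}, \ref{thm:jointdistKl}) are proved by exactly the same template you outline, transplanted to the horizontal setting over a function field: one pulls back $\mathrm{Kl}$ along $a:X\to\mathbb P^1$, twists by auxiliary representations, and uses the same monodromy input (\cite[11.1]{Ka88}) to kill $H^0$ and $H^2$. The quantitative versions of Weyl's criterion (Lemmas~\ref{lm:NRT} and~\ref{lm:Niederreiter}) and the estimate in Proposition~\ref{prop:Frobeinus-keyestimate} are the effective replacements for the bare Weyl criterion and trace-formula bound in your sketch. So your proposal is not only correct for the cited theorem but also an accurate summary of the engine driving the rest of the paper.
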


This result is known as the vertical Sato--Tate theorem for Kloosterman sums.  It is natural to ask whether the horizontal Sato--Tate distribution described below also holds. 
\begin{conjecture}[Katz, \cite{Ka80}]\label{conj:HST}
Fix a non-zero integer $a$. The set $\{\theta_p(a):p \nmid a \}$ is equidistributed in $[0,\pi]$ with respect to the Sato--Tate measure 
$\mathrm d\mu_{\mathrm{ST}}.$
\end{conjecture}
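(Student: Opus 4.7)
The plan is to apply the Weyl equidistribution criterion against the Sato--Tate measure. The Chebyshev polynomials of the second kind $U_k(\cos\theta)=\sin((k+1)\theta)/\sin\theta$, $k\geqslant 0$, form an orthonormal basis of $L^2([0,\pi],\mathrm d\mu_{\mathrm{ST}})$, so the conjecture is equivalent to showing that for every fixed integer $k\geqslant 1$,
\begin{align*}
\frac{1}{\pi(X)}\sum_{\substack{p\leqslant X\\ p\nmid a}}U_k(\cos\theta_p(a))\longrightarrow 0\qquad\text{as }X\to+\infty.
\end{align*}
So the whole problem reduces to producing cancellation in these horizontal sums of Chebyshev values at Kloosterman angles, for every fixed $k$.

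The next step is to recognise $U_k(\cos\theta_p(a))$ as the normalised trace of Frobenius on the $k$-th symmetric power of the Kloosterman sheaf: for $p\nmid a$,
\[ U_k(\cos\theta_p(a))=p^{-k/2}\,\mathrm{tr}\bigl(\mathrm{Frob}_{p,a}\mid \mathrm{Sym}^k\mathrm{Kl}_2\bigr), \]
where $\mathrm{Kl}_2$ is the Kloosterman sheaf on $\mathbf{G}_m/\mathbf{F}_p$. Assembling these pointwise data into a global Euler product, I would form the $L$-function
\begin{align*}
L(s,\mathrm{Sym}^k,a)=\prod_{p\nmid a}\det\bigl(1-p^{-s-k/2}\mathrm{Frob}_{p,a}\mid\mathrm{Sym}^k\mathrm{Kl}_2\bigr)^{-1},
\end{align*}
absolutely convergent for $\mathrm{Re}(s)>1$. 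A Wiener--Ikehara type Tauberian argument applied to $-L'/L$ then delivers the required estimate, provided $L(s,\mathrm{Sym}^k,a)$ extends meromorphically to the closed half-plane $\mathrm{Re}(s)\geqslant 1$, with no pole and no zero on the line $\mathrm{Re}(s)=1$.

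The main obstacle --- the step I expect to be by far the hardest --- is precisely this analytic continuation and non-vanishing. In the elliptic-curve analogue the corresponding property was achieved by Clozel, Harris, Shepherd-Barron and Taylor through potential modularity of the symmetric powers of the Tate module. In the Kloosterman setting there is no global $\ell$-adic Galois representation over $\mathbf{Q}$ to feed into such automorphy-lifting machinery: the Kloosterman local system is only ``motivic'' over each fixed $\mathbf{F}_p$, not over $\mathrm{Spec}\,\mathbf{Z}$. My strategy would therefore be to seek meromorphic continuation and a functional equation for $L(s,\mathrm{Sym}^k,a)$ directly, via a Rankin--Selberg integral representation or a Vorono\"i/Poisson-type summation formula for traces of symmetric powers, and then invoke a converse theorem to force cuspidal automorphy on $\mathrm{GL}_{k+1}/\mathbf{Q}$, from which the non-vanishing at $\mathrm{Re}(s)=1$ would follow by the Jacquet--Shalika bound. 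Even the very first Chebyshev case $k=1$, namely $\sum_{p\leqslant X,\,p\nmid a}K(p,a)/\sqrt p=o(\pi(X))$, lies outside the reach of current unconditional methods, and one should expect genuine new input --- perhaps a global incarnation of $\mathrm{Sym}^k\mathrm{Kl}_2$ as an automorphic object on some adelic group --- before the full conjecture can be attacked.
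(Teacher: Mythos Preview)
The statement you are attempting to prove is labelled \emph{Conjecture} in the paper, not Theorem. The paper does not claim a proof; on the contrary, it states explicitly that ``Conjecture~\ref{conj:HST} remains beyond reach'' and then pivots to proving function-field analogues (Theorems~\ref{thm:ChaiLi}, \ref{thm:STinSc}, \ref{thm:shortinterval}, \ref{thm:jointdistKl}) where the relevant $L$-functions are polynomials and the Riemann Hypothesis is available via Deligne's work. So there is no ``paper's own proof'' to compare against.

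Your outline is a fair description of the standard reduction: by Weyl's criterion with the orthonormal basis $U_k$, the conjecture is equivalent to cancellation in $\sum_{p\le X} U_k(\cos\theta_p(a))$ for each $k\ge 1$, and this would follow from suitable analytic properties of the symmetric-power $L$-functions $L(s,\mathrm{Sym}^k,a)$. But you correctly identify, and then do not resolve, the decisive gap: there is no known meromorphic continuation of these $L$-functions even to $\mathrm{Re}(s)=1$, for any $k\ge 1$. Your proposed route---integral representation plus converse theorem to force automorphy on $\mathrm{GL}_{k+1}/\mathbf Q$---is speculative; the paper in fact notes (citing Booker and Xi) evidence \emph{against} any simple automorphic origin for $K(p,a)$ even in the $\mathrm{GL}_2$ case $k=1$. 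Your final paragraph essentially concedes that the argument is incomplete and that ``genuine new input'' is needed. That is an honest assessment, but it means what you have written is a heuristic programme, not a proof. As things stand, the conjecture is open, and your proposal does not close it.
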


Conjecture \ref{conj:HST} is reminiscent of the original Sato--Tate conjecture, which was first formulated independently by Sato
and Tate in the context of elliptic curves, and then reformulated and extended to
the framework of cuspidal Hecke eigenforms for congruence subgroups of $\mathrm{SL}_2(\bZ)$ 
by Serre \cite{Se68}, predicting a similar equidistribution 
of Hecke eigenvalues of such cusp forms.
This conjecture has been confirmed by Barnet-Lamb, Geraghty, Harris and 
Taylor \cite{BGHT11} for non-CM, holomorphic cuspidal newforms of weight $k\geqslant2$.  
The works of Newton and Thorne \cite{NT21I, NT21II} re-established this fact as a consequence of the symmetric power functoriality for holomorphic modular forms. 

As for Conjecture \ref{conj:HST}, one might ask whether the Kloosterman sums $K(p,a)$, up to the sign, 
would appear as Hecke eigenvalues
of a suitable Hecke--Maass eigenform of $\GL_2(\bQ)$ for all large primes $p$ (cf. Katz \cite[p.15]{Ka80}). An affirmative answer to this question 
would allow us to construct $L$-functions as analytic tools, which can be utilized to study the Sato--Tate distribution of Kloosterman sums as predicted by Conjecture \ref{conj:HST}. 
However, the works of Booker \cite{Bo00} and Xi \cite{Xi20} indicate the lack of correlation between Hecke eigenvalues of Maass forms and Kloosterman sums over finite residue rings. As a result, the aforementioned affirmative answer seems rather unlikely.
On the other hand, the work of Chai and Li \cite{CL03} (see Theorem \ref{thm:ChaiLi} below) illustrates a different phenomenon in the situation of function fields.

\subsection{Kloosterman sums over function fields}
Let $K$ be a function
field with the constant field being a finite field $\mathbf F_q$ of characteristic $p$ with $q$ elements. 
Fix a non-trivial additive character of 
$\psi: \bF_q\to \mathbf C^*$. {\it In what follows, $a$ is an element in $K$ but not in $\bF_q$.} 
For any place $\fp$ of $K$, 
let $\bF_\fp$ be the residue field of the completion of $K$ at $\fp$. If $\fp$ is outside the zeros and poles of $a$, 
we denote by $\bar a$ the image of $a$ in $\mathbf F_{\fp}$. Consider the Kloosterman sum
\begin{align}\label{ksff}
\kl(\fp,a):=\sum_{x\in\bF_\fp^\times}\psi(\Tr_{\bF_\fp/\bF_q}(x+\bar a/x)).
\end{align}
As proved by Weil \cite{We48}, we have $|\kl(\fp,a)|\leqslant2\sqrt{N\fp}$, where $N\fp = |\bF_\fp|$ is the norm of the place $\fp$.
Correspondingly, we define $\theta_\fp(a)\in[0,\pi]$ via
\begin{align*}
\kl(\fp,a)&=2\sqrt{N\fp}\cos\theta_\fp(a).
\end{align*} 

\begin{theorem}[Chai--Li, \cite{CL03}]\label{thm:ChaiLi}
~

$(i)$ There exists an automorphic form $f$ of $\mathrm{GL}_2$ over $K$ which is an eigenfunction of the Hecke
operator $T_\fp$ with eigenvalue
$-\kl(\fp,a)$ at all places $\fp$ of $K$ outside the zeros and poles of $a$, that is,
\[L(s,f)\sim\prod_{\fp\text{ good}}\Big(1+\frac{\kl(\fp,a)}{N\fp^s}+\frac{1}{N\fp^{2s-1}}\Big)^{-1}.\]

$(ii)$ 
The set $\{\theta_\fp(a) : \fp~ {\rm outside~ the~ zeros~ and~ poles~ of~} a~{\rm and}~ |\bF_{\fp}| \le M\}$ is equidistributed in $[0,\pi]$ with respect to the Sato--Tate measure $\mathrm{d}\mu_{\mathrm{ST}}=\frac{2}{\pi}\sin^2\theta\ud\theta$ as $M\to +\infty$.
\end{theorem}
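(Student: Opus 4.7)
The plan is to build both parts of the theorem on top of Deligne's Kloosterman sheaf, its geometric monodromy, and the function-field Langlands correspondence. Let $C$ be the smooth projective curve over $\bF_q$ with function field $K$, and let $U\subset C$ be the open complement of the zeros and poles of $a$. Deligne's rank-two Kloosterman sheaf $\cK l_2$ is a lisse $\overline{\bQ}_\ell$-sheaf on $\mathbb{G}_{m,\bF_q}$, pure of weight~$1$, whose Frobenius trace at $\bar a\in \bF_\fp^\times$ is $-\kl(\fp,a)$, whose determinant is $\overline{\bQ}_\ell(-1)$, and whose geometric monodromy group Katz identified with $\SL_2$. Since $a\notin\bF_q$ induces a dominant morphism $a\colon U\to \mathbb{G}_{m,\bF_q}$ and $\SL_2$ is connected, the pull-back $\cF:=a^*\cK l_2$ retains geometric monodromy $\SL_2$; thus $\cF$ is a rank-two, geometrically irreducible lisse sheaf on $U$, pure of weight~$1$, with $\tr(\Frob_\fp\mid\cF_{\bar\fp})=-\kl(\fp,a)$ and $\det(\Frob_\fp\mid\cF_{\bar\fp})=N\fp$ at every closed point~$\fp$ of $U$.

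For part (i), $\cF$ corresponds to a continuous irreducible two-dimensional $\ell$-adic representation $\rho$ of $\pi_1(U)$ with determinant a Tate twist. I would feed $\rho$ into Drinfeld's proof of the global Langlands correspondence for $\GL_2$ over function fields to produce a cuspidal automorphic representation $\pi_f$ of $\GL_2(\bA_K)$ whose local Langlands parameter at every $\fp\in U$ is $\rho|_{W_\fp}$. Matching local Euler factors yields
\[L_\fp(s,\pi_f)=\det(1-\Frob_\fp N\fp^{-s}\mid\cF_{\bar\fp})^{-1}=\Bigl(1+\frac{\kl(\fp,a)}{N\fp^s}+\frac{1}{N\fp^{2s-1}}\Bigr)^{-1},\]
so a spherical new-vector $f\in\pi_f$ is a Hecke eigenform with $T_\fp f=-\kl(\fp,a)\,f$ at each place in $U$, giving the Euler product stated in the theorem.

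For part (ii), I would apply Weyl's equidistribution criterion to the Chebyshev polynomials $\{U_k\}_{k\ge 1}$ of the second kind, which span the orthogonal complement of the constants in $L^2([0,\pi],\mathrm d\mu_{\mathrm{ST}})$. For each $k\ge 1$, the sheaf $\Sym^k\cF$ on $U$ is lisse, pure of weight~$k$, with Frobenius trace $N\fp^{k/2}\,U_k(\cos\theta_\fp(a))$ at $\fp$. Because $\SL_2$ acts irreducibly and nontrivially on the $k$-th symmetric power of its standard representation, both $H^0_c(U_{\bar\bF_q},\Sym^k\cF)$ and $H^2_c(U_{\bar\bF_q},\Sym^k\cF)$ vanish. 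The Grothendieck--Lefschetz trace formula then presents $L(s,\Sym^k\cF,U)$ as a polynomial in $q^{-s}$, and Deligne's Weil~II identifies its reciprocal roots as pure of weight $k+1$. Hence $L(s,\Sym^k\cF,U)$ is analytic and non-vanishing on $\Re(s)>(k+1)/2$, in particular at $s=1+k/2$. A standard explicit-formula / function-field Tauberian argument at this point converts this non-vanishing into
\[\sum_{N\fp\leqslant X}U_k(\cos\theta_\fp(a))=o(\pi_K(X))\]
for every $k\ge 1$, which is the required Sato--Tate equidistribution.

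The essential external inputs are Katz's computation of the geometric monodromy of $\cK l_2$ and Drinfeld's theorem; both are highly nontrivial, and pinning down the correct local data (ramification, Swan conductors, and the precise local Langlands parameter at the places above the zeros and poles of $a$) for the Drinfeld correspondence is where most of the technical work in (i) sits. In contrast, the Sato--Tate conclusion in~(ii) is essentially formal once the monodromy of $\cF$ has been computed, because in the function-field setting the relevant $L$-functions are automatically polynomials with known weight bounds, so analytic continuation, functional equation, and location of zeros are all furnished for free by the Grothendieck--Lefschetz trace formula and Weil~II, a significant simplification compared with the analogous horizontal Sato--Tate problem over $\bQ$.
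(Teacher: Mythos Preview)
Your proposal is correct and matches the paper's treatment: the paper does not give an independent proof of this cited theorem but explains (immediately after the statement) that part~(ii) follows from Katz's identification of the geometric and arithmetic monodromy of $\mathrm{Kl}(1/2)$ as $\SL_2$, which is exactly your $\Sym^k$-sheaf argument via Grothendieck--Lefschetz and Weil~II, and the paper's Section~2 spells out precisely this machinery (with $j_*\mathcal F$ in place of your $H^\bullet_c$). For part~(i) the paper simply defers to Chai--Li, whose argument is indeed the Drinfeld global Langlands correspondence for $\GL_2$ applied to the irreducible Galois representation attached to $a^*\mathrm{Kl}$, just as you outline.
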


The Kloosterman sum $\kl(\fp,a)$ can also be formulated in the framework  of Deligne and Katz. 
For each prime number $\ell$ distinct from $p$, fix an isomorphism of fields $\iota: \overline{\mathbf Q}_\ell\stackrel\cong\to\mathbf C$. 
We use this isomorphism to relate $\ell$-adic numbers and complex numbers. 
Let $X$ be the smooth geometrically connected projective algebraic curve over
$\bF_q$ with the function field $K$. Places in $K$ are in one-to-one correspondence with closed points in $X$, and we 
denote them by the same notation. 
The element $a\in K$ defines a finite dominant morphism $$a:X\to \mathbb P^1$$
to the projective line $\mathbb P^1$ over $\mathbf F_q$. Let 
$\mathbb G_m=\mathbb P^1-\{0,\infty\}$ and let 
$$U=a^{-1}\mathbb G_m=X-\mathrm{supp}(a),$$
where $(a)$ denotes the principal divisor on $X$ defined by $a$. Then $a$ induces a finite dominant 
morphism $$a: U\to \mathbb G_m.$$ In \cite[Sommes trig. 7.8]{De77},
Deligne constructed a smooth $\overline{\mathbf Q}_\ell$-adic sheaf of rank $2$ on $\mathbb G_m$, called the Kloosterman sheaf $\mathrm{Kl}$, 
punctually pure of weight $1$ such that the Frobenius action on the stalk $\mathrm{Kl}_{\bar \lambda}$ of the Kloosterman sheaf at the geometric point $\bar \lambda$ over the $\bF_{q^n}$-point $\lambda$ of $\mathbb G_m$ is equal to
$$\iota\mathrm{Tr}(\mathrm{Frob}_\lambda, \mathrm{Kl}_{\bar \lambda})=-\sum_{x\in\bF_{q^n}^\times}\psi
\big(\Tr_{\bF_{q^n}/\bF_q}(x+\lambda/x)\big).$$ 
For the smooth sheaf $a^*\mathrm{Kl}$ on $U$, we have 
$$\iota\mathrm{Tr}(\mathrm{Frob}_\fp, (a^*\mathrm{Kl})_{\bar \fp})=-\kl(\fp,a).$$ Let $\eta$ be the generic point of $U$. 
Then the stalk $(a^*\mathrm{Kl})_{\bar \eta}$ forms a compatible family of $\ell$-adic representations of dimension $2$ of the absolute Galois group 
$\mathrm{Gal}(\overline K/K)$ of $K$ unramified at places $\fp\not\in\mathrm{supp}(a)$
with trace $-\kl(\fp, a)$ at $\mathrm{Frob}_\fp$. Here $\overline K$ denotes a separable closure of $K$.

Since $a: U\to \mathbb G_m$ is a finite dominant morphism, the homomorphism
$$a_*:\pi_1(U,\bar\eta)\to \pi_1(\mathbb G_m,\bar\eta)$$ induced by $a$ on fundamental groups is injective and $\mathrm{im}\,a_*$ is 
a subgroup of $\pi_1(\mathbb G_m,\bar\eta)$ of finite index. 
The geometric monodromy group (resp. the arithmetic monodromy group) of $\mathrm{Kl}$ (resp. $\mathrm{Kl}(1/2))$
is $\SL_2$ by \cite[11.1]{Ka88} (resp. \cite[11.3]{Ka88}), which implies the equidistribution of $\kl(\fp, a)$ with respect to the Sato-Tate measure, as in Theorem \ref{thm:ChaiLi} (ii).  Recall that $\mathrm{Kl}(1/2)=\mathrm{Kl}\otimes\overline{\mathbf Q}_\ell(1/2)$, 
where for any rational number $r$, 
$\overline{\mathbf Q}_\ell(r)$ is the inverse image on $X$ of the sheaf on $\mathrm{Spec}\,\mathbf F_q$ defined by the character 
$$\mathrm{Gal}(\overline{\mathbf F}_q/\mathbf F_q)\to \overline{\mathbf Q}_\ell^\times, \quad 
\mathrm{Frob}\to 1/q^{r}.$$ We have 
$$\iota\mathrm{Tr}\Big(\mathrm{Frob}_\fp, (a^*\mathrm{Kl}(1/2))_{\bar \fp}\Big)=-\frac{\kl(\fp,a)}{\sqrt{N\fp}}.$$

\subsection{Main results}
While Conjecture \ref{conj:HST} remains beyond reach, we present various kinds of equidistribution results for its counterpart over function fields, and track manipulation in this framework to provide explicit error terms. More precisely, we study the distribution of the Kloosterman sums $\kl(\fp,a)$, with $a$ fixed and $\mathfrak p$ varying in short intervals or arithmetic progressions, or, more generally, with $\Frob_{\mathfrak p}$ in conjugacy classes of a finite Galois group. 
Our main results are described below.

For a function field $K$, denote by $\Pi_d(K)$ the set of places in $K$ of degree $d$.
Let $E/K$ be a finite Galois extension with discriminant $\fD_{E/K}$ and Galois group $\Gal(E/K)$. Then $\mathrm{supp}\,\mathfrak D_{E/K}$
consists of places of $K$ ramified in $E$. Let $C$ be a conjugacy class in $\Gal(E/K)$. Put
\begin{align*}
\Pi_d(E/K,C)=\{\fp\in \Pi_d(K):\fp\not\in\supp\,\fD_{E/K},~\Frob_{\fp}=C\}. 
\end{align*}
The Chebotarev density theorem says that 
$$\lim_{d\to+\infty} \frac{|\Pi_d(E/K, C)|}{|\Pi_d(K)|}=\frac{|C|}{[E:K]}.$$
The following theorem gives an equidistribution of the Kloosterman sums $\kl(\fp,a)$ as $\fp$ varies in $\Pi_d(E/K,C)$ for $d$ large.

\begin{theorem}\label{thm:STinSc} Let $E$ be a finite Galois extension of $K$ with the same field of constants 
$\bF_q$. For any conjugacy class $C$ in $\mathrm{Gal}(E/K)$ and any subinterval $I\subseteq[0, \pi]$, we have  
\begin{align*}
\frac{1}{|\Pi_d(E/K,C)|}&\Big|\{ \fp \in \Pi_d(E/K,C):\, \fp\not\in\mathrm{supp}(a), \,\theta_\fp(a)\in I \} 
\Big|\\
&=\mu_{\mathrm{ST}}(I)+O\Big(q^{-\frac{d}{4}}\sqrt{[E:K]\mathfrak{N}_{a,E/K}}\Big),
\end{align*}
where the $O$-constant is absolute,
and $$\mathfrak{N}_{a,E/K}= N_{a,E/K}\Big(\max(B_a,B_{E/K})+1\Big) +g $$ 
with $N_{a,E/K}:=\sum_{\mathfrak p\in\mathrm{supp}(a)\cup\mathrm{supp}\,\mathfrak D_{E/K}} 
\mathrm{deg}(\mathfrak p)$, $B_a$ being the largest break of the restrictions to the inertia subgroups $I_{\mathfrak p}\; (\mathfrak p\in\mathrm{supp}(a))$
of the Galois representation corresponding to $a^*\mathrm{Kl}(1/2)$, $B_{E/K}$ being the 
largest break of the restrictions to the inertia subgroups $I_{\mathfrak p}\; (\mathfrak p\in\mathrm{supp}\,\mathfrak D_{E/K})$
of all irreducible representations of $\mathrm{Gal}(E/K)$, and $g$ being the genus of $K$. 
\end{theorem}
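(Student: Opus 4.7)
The plan is to combine Weyl's equidistribution criterion with character expansion on $\Gal(E/K)$, bound the resulting sums via the Grothendieck--Lefschetz trace formula applied to tensor products of the Kloosterman sheaf with Galois representations, and then balance a Beurling--Selberg truncation against the cohomological bound.

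First, I would approximate $\mathbf{1}_{[\alpha,\beta]}$ against $\ud\mu_{\mathrm{ST}}$ by a Beurling--Selberg polynomial of degree $N$ (to be chosen) in the Chebyshev basis $U_n(\cos\theta)=\sin((n+1)\theta)/\sin\theta$, which is orthonormal for $\ud\mu_{\mathrm{ST}}$; this introduces a truncation error of size $O(1/N)$. It then suffices to estimate, uniformly in $1\leq n\leq N$,
$$T_n:=\sum_{\fp\in\Pi_d(E/K,C),\ \fp\notin\supp(a)}U_n(\cos\theta_\fp(a)),$$
and column orthogonality for the characters of $G:=\Gal(E/K)$ lets us write $\mathbf{1}_{\Frob_\fp\in C}=\frac{|C|}{|G|}\sum_\chi\overline{\chi(C)}\chi(\Frob_\fp)$, so that $T_n=\frac{|C|}{|G|}\sum_\chi\overline{\chi(C)}\,S_{n,\chi}(d)$ with
$$S_{n,\chi}(d):=\sum_{\fp\in\Pi_d(K),\ \fp\text{ good}}\chi(\Frob_\fp)\,U_n(\cos\theta_\fp(a)).$$

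Next, I would interpret each $S_{n,\chi}(d)$ cohomologically. Let $\cF_\chi$ be the lisse $\overline{\bQ}_\ell$-sheaf on $X\setminus\supp\fD_{E/K}$ attached to $\chi$. On the affine curve $V:=X\setminus(\supp(a)\cup\supp\fD_{E/K})$, form
$$\cG_{n,\chi}:=a^*\Sym^n\mathrm{Kl}(1/2)\otimes\cF_\chi,$$
a lisse sheaf pure of weight $0$ and of rank $(n+1)\dim\chi$, whose Frobenius trace at a good $\fp$ recovers (up to sign) $\chi(\Frob_\fp)U_n(\cos\theta_\fp(a))$. The Lefschetz trace formula then rewrites $d\cdot S_{n,\chi}(d)$, modulo $O((n+1)\dim\chi\cdot q^{d/2})$ from places of smaller degree, as $-\sum_i(-1)^i\Tr(\Frob_q^d\mid H^i_c(\bar V,\cG_{n,\chi}))$. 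Since $V$ is affine, $H^0_c=0$; the critical vanishing $H^2_c=0$ for $(n,\chi)\neq(0,\mathbf{1})$ follows because (i) $E/K$ has the same constant field $\bF_q$, so the geometric restriction of $\cF_\chi$ is still $\chi$ and nontrivial when $\chi\neq\mathbf{1}$; (ii) by Katz \cite[11.1]{Ka88} the geometric monodromy of $\mathrm{Kl}$ is $\SL_2$, and since the image of $a_*\pi_1^{\mathrm{geom}}(U)$ in $\pi_1^{\mathrm{geom}}(\mathbb G_m)$ is open, the connectedness of $\SL_2$ forces $a^*\Sym^n\mathrm{Kl}$ to remain geometrically irreducible for $n\geq 1$; so the semisimple $\cG_{n,\chi}$ carries no trivial geometric constituent. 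Deligne's Riemann hypothesis then gives $|\Tr(\Frob_q^d\mid H^1_c)|\leq(\dim H^1_c)\cdot q^{d/2}$, and an application of Grothendieck--Ogg--Shafarevich, combined with the Swan bound $\Swan_\fp(\cG_{n,\chi})\leq(n+1)\dim\chi\cdot\max(B_a,B_{E/K})$ at each ramified place, yields
$$\dim H^1_c(\bar V,\cG_{n,\chi})\ll(n+1)\dim\chi\cdot\mathfrak N_{a,E/K}.$$

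Finally, using $|\chi(C)|\leq\dim\chi$, $\sum_\chi(\dim\chi)^2=[E:K]$, and Chebotarev's $|\Pi_d(E/K,C)|\sim\frac{|C|}{|G|}q^d/d$, I would obtain the per-$n$ estimate
$$\frac{|T_n|}{|\Pi_d(E/K,C)|}\ll(n+1)\,[E:K]\,\mathfrak N_{a,E/K}\,q^{-d/2}.$$
Summing over $n\leq N$ in the Beurling--Selberg expansion contributes $O(N\,[E:K]\,\mathfrak N_{a,E/K}\,q^{-d/2})$, and balancing this against the truncation error $O(1/N)$ at $N\sim q^{d/4}/\sqrt{[E:K]\mathfrak N_{a,E/K}}$ produces the stated bound $O(q^{-d/4}\sqrt{[E:K]\mathfrak N_{a,E/K}})$. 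The main obstacle is securing the conductor estimate with the right dependence: the Swan conductor of $\cG_{n,\chi}$ must be shown to grow only linearly in its rank with a constant controlled by $\max(B_a,B_{E/K})$, which requires a careful analysis of the break decomposition of $a^*\Sym^n\mathrm{Kl}$ and of $\cF_\chi$ at each bad place, together with the behavior of breaks under tensor product and symmetric powers.
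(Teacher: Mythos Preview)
Your proposal is correct and follows essentially the same route as the paper: orthogonality on $\Gal(E/K)$ to expand the Chebotarev condition, cohomological interpretation of the twisted $\Sym^n$-moments via the trace formula and Deligne's purity, Grothendieck--Ogg--Shafarevich together with the break bound $\max(B_a,B_{E/K})$ to control the conductor linearly in the rank, Niederreiter/Beurling--Selberg truncation, and the final balance at $N\sim q^{d/4}/\sqrt{[E:K]\mathfrak N_{a,E/K}}$. The only cosmetic difference is that you work with $H^i_c$ on the open curve while the paper uses $H^i(X,j_*\mathcal F)$, and the ``main obstacle'' you flag (the Swan bound under tensor products and symmetric powers) is exactly what the paper packages into its Proposition~2.1(ii) and Corollary~2.1.
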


\begin{remark} Note that $N_{a,E/K},B_a\geqslant 1$ as $a\in K$ is non-constant. 
The result is trivial if $[E:K]$ is not $o(q^{d/2})$. 
\end{remark}

\begin{remark} M. R. Murty and  V. K. Murty \cite[Theorem 1]{MM10} proved a hybrid Chebotarev--Sato--Tate theorem for elliptic curves
over totally real number fields. This was extended recently by Wong \cite{Wo19} to Hilbert modular forms, and an explicit error term was given therein by assuming the Langlands functoriality conjecture and GRH.
\end{remark}

\begin{remark} 
In the context of elliptic curves or holomorphic cusp forms over $\bQ,$ an explicit error term in the Sato--Tate distribution has been obtained by Thorner
\cite{Th21}, building on the recent breakthrough of Newton and Thorne \cite{NT21I, NT21II} on symmetric power functoriality for holomorphic modular forms. Some prior conditional results can be found in \cite{Mu85, Th14,BK16,RT17}. For instance, assuming GRH for all symmetric power $L$-functions for a fixed newform $f$ without CM, Rouse and Thorner \cite{RT17} proved that
\begin{align}\label{eq:Rouse-Thorner}
\frac{1}{x/\log x}|\{p\leqslant x: \lambda_f(p)\in I\}|=\mu_{\mathrm{ST}}(I)+O_f(x^{-\frac{1}{4}}\log x),
\end{align}
where $\lambda_f(p)$ denotes the (normalized) $p$-th Fourier coefficient of $f,$ $I$ is a subinterval of $[-2,2]$ and $\mu_{\mathrm{ST}}$ denotes the Sato--Tate measure on $[-2,2]$. Note that $-\frac{1}{4}$ here can be compared with our exponent $q^{-\frac{d}{4}}$ in Theorem $\ref{thm:STinSc},$ as elaborated below. To conclude the equidistribution, one
amplifies the indicator function of $I$ by linear combinations of symmetric powers, which produces a natural error, say $\delta,$ and Deligne's bound for sums of Frobenius traces would give another term of the shape $\delta^{-1}x^{-\frac{1}{2}}$ up to some harmless factors. Choosing $\delta=x^{-\frac{1}{4}}$  to balance the above two terms, one obtains the exponent $-\frac{1}{4}$ in \eqref{eq:Rouse-Thorner}, and the exponent $q^{-\frac{d}{4}}$ in Theorem $\ref{thm:STinSc}$ appears for the same reason.
\end{remark}

The Chebotarev density theorem is a generalization of Dirichlet's theorem 
on equidistribution of primes in arithmetic progressions. Consequently we shall regard places in $\Pi_d(E/K,C)$ 
as generalization of primes in an arithmetic progression. 

We now specialize to the rational function field $K=\mathbf F_q(t)$. 
Given a polynomial $A$ of degree $d$ and an integer $0\leq h<d$, a \emph{short
interval} around $A$ is defined to be
\begin{equation}
I(A,h):=\{f \in \bF_q[T]: \deg(f-A) \leq h \}.
\end{equation}
In other words, $I(A,h)$ consists of  
elements in $\bF_q[T]$ that are $h$-close to $A$ with respect to the valuation at the place $\infty$. 
Suppose $A$ is monic. Since $h<d$, all polynomials in $I(A,h)$ are monic of degree $d$. Let $\Pi_d(A, h)$ be the 
subset of $I(A, h)$ consisting of prime polynomials with nonzero constant term. Each prime polynomial corresponds to a place of 
$\mathbf F_q(T)$, which we denote by the same notation. Our second result concerns the distribution of Kloosterman sums $\kl(\fp, a)$ as $\fp$ varies in $\Pi_d(A,h)$ 
for small $h$.

\begin{theorem}\label{thm:shortinterval}
Fix $a \in \bF_q(T)-\bF_q$.  
Let $0\leq h<d$ be two nonnegative integers, 
and let $A\in \bF_q[T]$ be a monic polynomial of degree $d$. Assume $$(d,h)\neq (2,1),(3,1),(4,2),\ (5,2).$$ 
For any subinterval $I\subseteq[0, \pi]$, we have  
\begin{eqnarray*}
&&\frac{1}{|\Pi_d(A,h)|} \Big|\{ \fp \in \Pi_d(A,h):\,\fp\not\in\mathrm{supp}(a),\,\theta_\fp(a)\in I\}
\Big|\\
&=&\mu_{\mathrm{ST}}(I)+
O\Big(q^{\frac{d-2h-2}4}N_a^{\frac{1}{2}}(B_a+d-h)^{\frac{1}{2}} \Big),
\end{eqnarray*}
where the $O$-constant is absolute.
\end{theorem}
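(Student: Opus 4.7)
The plan is to derive Theorem \ref{thm:shortinterval} as a direct application of Theorem \ref{thm:STinSc} to a carefully chosen abelian extension of $K=\bF_q(T)$ ramified only at infinity, so that the stated error bound reads off from the general one.

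\textbf{Short intervals as Frobenius classes.} Let $K_\infty=\bF_q((1/T))$ be the completion of $K$ at $\infty$, and set $U_\infty^{(m)}=1+(1/T)^m\bF_q[[1/T]]$. For two monic polynomials $f_1,f_2$ of the same degree $d$, the elementary identity
\[
f_1/f_2\in U_\infty^{(m)}\iff \deg(f_1-f_2)\leq d-m
\]
shows that the condition $f\in I(A,h)$ is equivalent to $f/A\in U_\infty^{(d-h)}$. Let $E/K$ be the ray class field of $K$ of conductor $\infty^{d-h}$, i.e.\ a piece of the Carlitz cyclotomic function field at $\infty$. Then $E/K$ is abelian, ramified only at $\infty$, retains the field of constants $\bF_q$, and by local class field theory $\Gal(E/K)\cong U_\infty^{(1)}/U_\infty^{(d-h)}$, so $[E:K]=q^{d-h-1}$. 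Under the Artin map a monic prime $f$ of degree $d$ has Frobenius equal to the class of $f/T^d\in U_\infty^{(1)}/U_\infty^{(d-h)}$; hence there is a unique element $C_A\in\Gal(E/K)$ such that
\[
\Pi(A,h)=\Pi_d(E/K,C_A).
\]

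\textbf{Computation of invariants and substitution.} Since $K=\bF_q(T)$ has genus $g=0$ and $E/K$ is ramified only at $\infty$, one has $N_{a,E/K}\leq N_a+1$. Every irreducible (automatically abelian) character of $\Gal(E/K)$ has Artin conductor dividing $\infty^{d-h}$; those of exact conductor $\infty^{d-h}$ (existing for every $h\leq d-2$) carry upper-numbering break $d-h-1$, so $B_{E/K}=d-h-1$. Using $B_a\geq 1$,
\[
\mathfrak{N}_{a,E/K}=N_{a,E/K}(\max(B_a,B_{E/K})+1)+g\;\ll\;N_a(B_a+d-h).
\]
Substituting into the error bound of Theorem \ref{thm:STinSc} gives
\[
q^{-d/4}\sqrt{[E:K]\cdot\mathfrak{N}_{a,E/K}}\ll q^{-d/4}\sqrt{q^{d-h-1}\cdot N_a(B_a+d-h)}=q^{(d-2h-2)/4}N_a^{1/2}(B_a+d-h)^{1/2},
\]
which is exactly the bound of Theorem \ref{thm:shortinterval}.

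\textbf{Main obstacle and exceptional cases.} The substantive content lies in the first step: the class-field-theoretic identification of $\Pi(A,h)$ with a single Frobenius class in a ray class field at $\infty$, together with the identification of the maximal break $B_{E/K}=d-h-1$ via the correspondence between the unit filtration $U_\infty^{(\cdot)}$ and the ramification filtration in upper numbering. Once this algebraic setup is in place, the remainder of the proof is a mechanical substitution into Theorem \ref{thm:STinSc}. The excluded pairs $(d,h)\in\{(2,1),(3,1),(4,2),(5,2)\}$ are the low-dimensional edge cases where the short-interval prime polynomial theorem underlying the count $|\Pi_d(E/K,C_A)|=|\Pi(A,h)|$ cannot be effectively controlled, so that the normalization in Theorem \ref{thm:STinSc} fails to be accurate enough to yield a meaningful statement.
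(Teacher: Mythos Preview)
Your proposal is correct and yields exactly the stated error term, but it takes a genuinely different route from the paper's own proof. The paper does not work with a class field at $\infty$; instead it introduces the coefficient-reversal involution $f\mapsto f^*(T)=T^{\deg f}f(1/T)$, observes that $f\in I(A,h)$ is equivalent to $f^*$ lying in a fixed residue class modulo $T^{d-h}$, and proves the elementary identity $\kl(\fp,a)=\kl(\fp^*,\tilde a)$ with $\tilde a(T)=a(1/T)$. This converts the short-interval problem into a union over $\mu\in\bF_q^\times$ of arithmetic progressions modulo $T^{d-h}$ for the Kloosterman sums attached to $\tilde a$, after which the argument of Theorem~\ref{thm:STinSc} is rerun with the standard Carlitz extension $E=K_{T^{d-h}}$ (ramified at $0$ and tamely at $\infty$). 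You instead recognize directly that $I(A,h)$ is a congruence condition at $\infty$ and invoke class field theory to produce an abelian $E/K$ ramified only at $\infty$ with $\Gal(E/K)\cong U_\infty^{(1)}/U_\infty^{(d-h)}$, so that Theorem~\ref{thm:STinSc} applies as a black box with the original $a$. Your route is more conceptual and avoids both the involution trick and the Kloosterman identity; the paper's route is more hands-on, relying only on the explicitly described Carlitz extensions and an elementary exponential-sum identity rather than on the existence theorem of class field theory and the identification of breaks via the local unit filtration. One small terminological point: your $E$ is not literally the ray class field of conductor $\infty^{d-h}$ (which would also carry the constant-field and tame pieces), but the subfield cut out by the quotient $U_\infty^{(1)}/U_\infty^{(d-h)}$; your subsequent description of $\Gal(E/K)$ makes the intended object clear, and the computation of $[E:K]$, $N_{a,E/K}$, and $B_{E/K}$ then goes through as you wrote.
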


\begin{remark} The statement is non-trivial only when $h>d/2-1$, in which case one obtains the equidistribution of Kloosterman sums in short intervals. It seems very difficult at present to beat this barrier to obtain similar equidistributions in shorter intervals. Note that Theorem \ref{thm:shortinterval} can be regarded as a high-rank analogue of the Prime Number Theorem in short intervals in the setting of function fields, and the classical (rank-one) case has been successfully treated by Bank, Bary-Soroker and Rosenzweig \cite{BBSR15} in rather short intervals.
\end{remark}

Theorem \ref{thm:STinSc} can be regarded as a joint Chebotarev--Sato--Tate distribution of Kloosterman sums over a function field.  We also have a joint Sato--Tate distribution for a finite family of Kloosterman sums over a function field.

Let $a_i$ $(i=1,\ldots, n)$ be elements in $K$ not lying in $\mathbf F_q$. For any  place $\mathfrak p$ of $K$ outside $\mathrm{supp}(a_1)\cup\cdots\cup \mathrm{supp}(a_n)$, recall that the angles $\theta_{\mathfrak p}(a_i)$ are defined by 
\begin{align*}
\mathrm{Kl}(\mathfrak p,a_i)&= 2 \sqrt{q}\cos\theta_{\mathfrak p}(a_i).
\end{align*}

\begin{theorem}\label{thm:jointdistKl} 
Let $\Pi_d(a_1,\ldots, a_n)$ be the set of places $\mathfrak p$ of $K$ of degree $d$ outside the zeros and poles of $a_i$ $(i=1, \ldots, n)$. Suppose that, for each $i$, there exists a place $\mathfrak p$ which is a zero of $a_i$ but not a zero of $a_j$ for any other $j\not =i$. Then 
for any subintervals $I_1, \ldots, I_n\subset [0,\pi]$, we have
\begin{eqnarray*}
&& \frac{|\{\mathfrak p\in \Pi_d(a_1,\ldots,a_n): 
\theta_{\mathfrak p}(a_i)\in I_i\;(i=1, \ldots, n)\}|}{|\Pi_d(a_1,\ldots, a_n)|} =\mu_{\mathrm{ST}}(I_1)\cdots \mu_{\mathrm{ST}}(I_n)
+O(q^{-\frac{d}{2(n+1)}}).
\end{eqnarray*}
\end{theorem}

Although this paper only deals with the distribution of Kloosterman sums over function fields, our results are valid for local systems with $\mathrm{SL}(2)$ monodromy and suitable ramification restrictions. Moreover, the $n$ families of angles in Theorem \ref{thm:jointdistKl}
don't have to come from the same local system.

\subsection{Organization of this paper}
This paper is organized as follows. In Section 2, we use the works of Deligne and Katz to study the $L$-functions of the sheaves
$\mathrm{Sym}^k\big(a^*\mathrm{Kl}(1/2)\big)$, and other sheaves with similar properties. From these we deduce the estimate
Corollary~\ref{twistedsymk} which is one of the key ingredients for the proofs of our main results.
Theorem \ref{thm:STinSc} is proved in Section 3. 
In Section 4, we prove Theorem \ref{thm:shortinterval} by relating the distribution in a short interval to that in arithmetic progressions. In Section 5, we establish an Erd\H{o}s--Tur\'an inequality for $\mathrm{SU}(2)^{\times n}$ and obtain a criterion of joint Sato--Tate distributions for a finite family of  Galois representations with monodromy groups $\mathrm{SL}(2)$, from which  Theorem \ref{thm:jointdistKl} follows.
In the appendix we prove an effective version of the Chebotarev density theorem that is used in the paper.

\subsection*{List of notation.}
\begin{itemize}[leftmargin=25pt]
\item $\Pi(K)$: the set of places in $K$
\item $\Pi_d(K)$: the subset of places in $\Pi(K)$ with degree $d$
\item $\Pi(E/K)$: the subset of places in $\Pi(K)$ unramified in $E$
\item $\Pi(E/K,C)$: the subset of places $\fp$ in $\Pi(E/K)$ with $\mathrm{Frob}_\fp= C$
\item $\Pi_d(E/K,C):= \Pi_d(K)\cap \Pi(E/K,C)$
\item $\Pi_d$: the set of monic prime polynomials of degree $d$ and nonzero constant term 
\item $I(A,h):=\{f \in \bF_q[T]: \deg(f-A) \leq h \}$
\item $\Pi_d(A, h)$: the subset of $I(A, h)$ consisting of degree $d$ prime polynomials with nonzero constant term
\item $\Pi_d(a_1,..., a_n)$: the subset of places $\mathfrak p$ in $\Pi_d(K)$ outside the zeros and poles of non-constant elements $a_i\in K$ $(i=1,..., n)$
\end{itemize}

\subsection*{Acknowledgements}
The authors would like to thank the  referee for valuable suggestions which improved the presentation of this paper.
LF is supported by 2021YFA1000700, 2023YFA1009703 and NSFC12171261. YKL is supported by GRF (No. 17303619, 17307720) and NSFC (No. 12271458). WCWL was supported by Simons Foundation (No.  355798). 
PX is supported by NSFC (No. 12025106).

\section{Sums of traces of Frobenius} 

\subsection{Basic properties of sums of traces}

Let $K$ be a function field with the constant field $\mathbf F_q$, let $X$ be the geometrically connected 
smooth projective algebraic curve over $\mathbf F_q$
with function field $K$, and let 
$$\rho: \mathrm{Gal}(\overline K/K)\to \mathrm{GL}(V)$$ be a Galois $\overline{\mathbf Q}_\ell$-representation unramified on 
an open subset $U$ of $X$. The Artin conductor of $\rho$ is defined to be the 
divisor 
$$\mathfrak D(\rho)=\sum_{\mathfrak p\in X-U} \Big(\mathrm{dim}\,V-\mathrm{dim}\, V^{I_{\mathfrak p}}
+\mathrm{sw}_{\mathfrak p}(\rho)\Big)\mathfrak p,$$
where $I_{\mathfrak p}$ is the inertia subgroup at $\mathfrak p$, and 
$\mathrm{sw}_{\mathfrak p}(\rho)$ is the Swan conductor of the representation $\rho|_{I_\fp}$. 
By abuse of notation, we also call 
\begin{align}\label{eq:Artinconductor}
a(\rho):=\mathrm{deg}\,\mathfrak D(\rho)=
\sum_{\mathfrak p\in X-U} \Big(\mathrm{dim}\,V-\mathrm{dim}\, V^{I_{\mathfrak p}}+\mathrm{sw}_{\mathfrak p}(\rho)\Big)
\mathrm{deg}(\mathfrak p)
\end{align} 
the Artin conductor of $\rho$. As in the introduction, we fix an isomorphism 
$\iota:\overline{\mathbf Q}_\ell\stackrel\cong\to\mathbf C$ so that we can transform an $\ell$-adic number to a complex number via $\iota$.
We say $\rho$ is \emph{punctually $\iota$-pure of weight $0$} if the sheaf $\mathcal F$ has this property (\cite[1.2.2]{De80}), 
that is, for any place $\mathfrak p$ in $U$, and any eigenvalue $\lambda$ of $\rho(\mathrm{Frob}_{\mathfrak p})$, we have 
$$|\iota(\lambda)|=1.$$ 
We say $\rho$ has \emph{no geometric invariant} (resp. \emph{no geometric coinvariant})
if $V^{\mathrm{Gal}(\overline K/K\overline{\mathbf F}_q)}=0$ (resp. $V_{\mathrm{Gal}(\overline K/K\overline{\mathbf F}_q)}=0$). 

For each positive integer $m$, put
\begin{align}\label{def:S(symk,m)}
S_{\rho}(m):=\sum_{\mathrm{deg}(\mathfrak p)|m }
\mathrm{deg}(\mathfrak p)\,\mathrm{Tr}(\mathrm{Frob}^{\frac{m}{\mathrm{deg}(\mathfrak p)}}_\fp, V^{I_\fp}).
\end{align} 
We have the following bound for $\iota S_{\rho}(m)$.

\begin{lemma}\label{lm:boundforS} 
Suppose $\rho$ is punctually $\iota$-pure of weight $0$, and has neither geometric invariant nor geometric coinvariant. Then we have 
\begin{align*}
|\iota S_{\rho}(m)| &\leqslant q^{m/2} ((2g-2)\dim(\rho)+a(\rho)).
\end{align*}
\end{lemma}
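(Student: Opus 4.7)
The plan is to realize $S_\rho(m)$ as a Grothendieck--Lefschetz sum, kill two of the three cohomology groups using the hypotheses, and combine Deligne's weight bound with a Grothendieck--Ogg--Shafarevich Euler characteristic calculation. Let $\cF$ be the lisse $\overline{\bQ}_\ell$-sheaf on $U$ corresponding to $\rho$, let $j\colon U\hookrightarrow X$ be the open immersion, and consider $j_*\cF$ on $X$. Its stalk at any closed point $\fp$ is $V^{I_\fp}$, and every closed point $\fp$ with $\deg\fp\mid m$ contributes exactly $\deg\fp$ points to $X(\bF_{q^m})$, each yielding the trace $\Tr(\Frob_\fp^{m/\deg\fp},V^{I_\fp})$. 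Hence the Grothendieck--Lefschetz trace formula gives
\begin{equation*}
S_\rho(m)=\sum_{i=0}^{2}(-1)^i\,\Tr\bigl(\Frob_q^m,\, H^i(X_{\overline{\bF}_q},\, j_*\cF)\bigr).
\end{equation*}

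The next step is to show $H^0=H^2=0$. One has $H^0(X_{\overline{\bF}_q},j_*\cF)=H^0(U_{\overline{\bF}_q},\cF)=V^{\Gal(\overline K/K\overline{\bF}_q)}$, which vanishes by the no-geometric-invariant hypothesis. For $H^2$, since $\cF$ is lisse on an open subset of the smooth projective curve $X$, the extension $j_*\cF$ coincides (up to the appropriate shift) with the perverse middle extension, so Poincar\'e--Verdier duality on $X$ identifies $H^2(X_{\overline{\bF}_q},j_*\cF)$ with the dual of $H^0(X_{\overline{\bF}_q},j_*\cF^\vee)(-1)=(V^\vee)^{\Gal(\overline K/K\overline{\bF}_q)}(-1)$, which in turn equals the Tate-twisted geometric coinvariants $V_{\Gal(\overline K/K\overline{\bF}_q)}(-1)$; this vanishes by the second hypothesis. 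Only $H^1$ contributes.

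Finally, Deligne's Weil~II theorem applied to the pure weight-$0$ middle extension $j_*\cF$ forces every eigenvalue of $\Frob_q$ on $H^1(X_{\overline{\bF}_q},j_*\cF)$ to have $\iota$-absolute value at most $q^{1/2}$, whence $|\iota S_\rho(m)|\leq q^{m/2}\dim H^1$. Because $H^0=H^2=0$, one has $\dim H^1=-\chi(X_{\overline{\bF}_q},j_*\cF)$; combining the short exact sequence $0\to j_!\cF\to j_*\cF\to i_*i^*j_*\cF\to 0$ (with $i\colon X\setminus U\hookrightarrow X$) with the Grothendieck--Ogg--Shafarevich formula for $\chi_c(U_{\overline{\bF}_q},\cF)$ recovers precisely the Artin conductor \eqref{eq:Artinconductor}, yielding $-\chi=(2g-2)\dim\rho+a(\rho)$ and the claimed bound. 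The one point requiring genuine care is the vanishing of $H^2$, which is not immediate from the no-coinvariants assumption but relies on identifying $j_*\cF$ with a middle extension so that Poincar\'e duality pairs $H^2(X,j_*\cF)$ perfectly with $H^0(X,j_*\cF^\vee)$; the remaining degree-bookkeeping to match up with the definition of $a(\rho)$ is routine.
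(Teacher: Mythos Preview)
Your proof is correct and follows essentially the same route as the paper: Grothendieck trace formula for $j_*\mathcal F$ on $X$, vanishing of $H^0$ and $H^2$ from the (co)invariant hypotheses, Deligne's weight bound on $H^1$, and Grothendieck--Ogg--Shafarevich for $\dim H^1$. The only cosmetic differences are that the paper cites \cite[1.4.1]{De80} directly for the identifications $H^0\cong V^{\Gal(\overline K/K\overline{\bF}_q)}$ and $H^2\cong V_{\Gal(\overline K/K\overline{\bF}_q)}(-1)$ (rather than spelling out the middle-extension/Poincar\'e duality argument as you do), and it invokes GOS for $j_*\cF$ in one step rather than passing through the exact sequence $0\to j_!\cF\to j_*\cF\to i_*i^*j_*\cF\to 0$.
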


\begin{proof} The representation $\rho$ defines a smooth $\overline{\mathbf Q}_\ell$-sheaf
$\mathcal F$ on $U$ so that the representation $V$ is identified with $\mathcal F_{\bar\eta}$ as $\mathrm{Gal}(\overline K/K)$-modules, 
where $\eta$
is the generic point of $X$. Let $X(\mathbf F_{q^m})$ be the set of $\mathbf F_{q^m}$-points in $X$. Each point
$x\in X(\mathbf F_{q^m})$ lies above a place $\mathfrak p$ with $\mathrm{deg}(\mathfrak p)|m$, and there are exactly 
$\mathrm{deg}(\mathfrak p)$ points in $X(\mathbf F_{q^m})$ lying above $\mathfrak p$. Let $j:U\hookrightarrow X$ 
be the open immersion. We have $(j_*\mathcal F)_{\bar x}\cong V^{I_{\mathfrak p}}$ and 
$$S_{\rho}(m)=\sum_{x\in X(\mathbb F_{q^m})} \mathrm{Tr}(\mathrm{Frob}_x, (j_*\mathcal F)_{\bar x}).$$
 By the Grothendieck trace formula \cite[Rapport 3.2]{De77}, we have
\begin{eqnarray}\label{GPF}
S_{\rho}(m)=\sum_{i=0}^2(-1)^i \mathrm{Tr}(F^m, H^i(X\otimes_{\mathbf F_q}\overline{\mathbf F}_q, j_*\mathcal F)),
\end{eqnarray} 
where $F$ is the Frobenius correspondence, and
$H^i(\hbox{-,-})$ denote the $\ell$-adic cohomology groups. 
By \cite[1.4.1]{De80}, we have
\begin{eqnarray*}
H^0(X\otimes_{\mathbf F_q}\overline{\mathbf F}_q, j_*\mathcal F)&\cong& V^{\mathrm{Gal}(\overline K/K\overline{\mathbf F}_q)},\\
H^2(X\otimes_{\mathbf F_q}\overline{\mathbf F}_q, j_*\mathcal F)&\cong& V_{\mathrm{Gal}(\overline K/K\overline{\mathbf F}_q)}(-1).
\end{eqnarray*}
They vanish by our assumption. By (\ref{GPF}), we have
$$S_{\rho}(m)=-\mathrm{Tr}(F^m, H^1(X\otimes_{\mathbf F_q}\overline{\mathbf F}_q, j_*\mathcal F)).$$
By the Grothendieck--Ogg--Shafarevich formula (\cite[X 7.12]{Gr77}) for the Euler characteristic,
we have $$\mathrm{dim}\, H^1(X\otimes_{\mathbf F_q}\overline{\mathbf F}_q, j_*\mathcal F))=(2g-2)\dim(\rho)+a(\rho).$$
By Deligne's theorem \cite[3.2.3]{De80} (the Weil conjecture), for any eigenvalue $\lambda_i$ of $F$ on 
$H^1(X\otimes_{\mathbf F_q}\overline{\mathbf F}_q, j_*\mathcal F)$, we have
$$|\iota(\lambda_i)|=q^{1/2}.$$
Our estimate for $S_{\rho}(m)$ follows. 
\end{proof}

The above lemma yields the following key estimate for sums of Frobenius traces.

\begin{proposition}\label{prop:Frobeinus-keyestimate} 
Suppose $\rho:\mathrm{Gal}(\overline K/K)\to \mathrm{GL}(V)$ is unramified on $U$, punctually $\iota$-pure of weight $0$, 
and has neither geometric invariant nor geometric coinvariant.

(i) For each positive integer $m,$ we have
\begin{align*}
\Bigg|\sum_{\fp\in U,\,\mathrm{deg}(\mathfrak p) = m} \iota \mathrm{Tr}(\mathrm{Frob}_{\mathfrak p}, V)\Bigg|
\leq
\frac{q^{m/2}}m \Big((6g+1)\dim(\rho)+a(\rho)\Big)+\omega \dim(\rho),
\end{align*}
where
$\omega$ is the number of distinct places in $X-U$.

(ii) Let $B$ be the largest break
of the representations $\rho|_{I_\fp}$ for all $\mathfrak p\in X-U$, and let
$$N=\sum_{\mathfrak p\in X-U} \mathrm{deg}(\mathfrak p)$$ be the number of points in
$(X-U)\otimes_{\mathbf F_q} \overline{\mathbf F}_q$. We have 
\begin{align*}
\Bigg|\sum_{\fp\in U,\,\mathrm{deg}(\mathfrak p) = m} \iota \mathrm{Tr}(\mathrm{Frob}_{\mathfrak p}, V)\Bigg|
\leq
\frac{q^{m/2}}m (6g+1+N(B+3))\mathrm{dim}(\rho),
\end{align*}

\end{proposition}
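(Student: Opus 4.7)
The plan is to isolate the target sum from the weighted trace sum $S_\rho(m)$ of Lemma~\ref{lm:boundforS}, and to control the auxiliary pieces by elementary point counting. Starting from the definition \eqref{def:S(symk,m)} and peeling off the top-degree unramified contribution, I obtain
\begin{align*}
m\sum_{\substack{\fp\in U\\ \deg(\fp)=m}}\Tr(\Frob_\fp,V)
&=S_\rho(m)-m\sum_{\substack{\fp\in X-U\\ \deg(\fp)=m}}\Tr(\Frob_\fp,V^{I_\fp})
-\sum_{\substack{d\mid m\\ d<m}}d\sum_{\deg(\fp)=d}\Tr(\Frob_\fp^{m/d},V^{I_\fp}).
\end{align*}
Lemma~\ref{lm:boundforS} bounds $|\iota S_\rho(m)|$ by $q^{m/2}((2g-2)\dim\rho+a(\rho))$.

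For the remaining two sums I use the uniform estimate $|\iota\Tr(\Frob_\fp^k,V^{I_\fp})|\leq \dim V^{I_\fp}\leq \dim\rho$ valid for every $\fp\in X$ and every $k\geq 1$: on $U$ this is the punctual $\iota$-purity hypothesis, and on $X-U$ it follows from Deligne's Weil~II, since $j_*\mathcal F$ is then mixed of weight $\leq 0$, so every Frobenius eigenvalue on $(j_*\mathcal F)_{\bar\fp}=V^{I_\fp}$ has $\iota$-absolute value $\leq 1$. The middle sum therefore contributes at most $m\omega\dim\rho$. For the smaller-degree sum, every proper divisor of $m$ is $\leq m/2$, and $d\cdot\#\{\fp:\deg\fp=d\}\leq|X(\mathbf F_{q^d})|\leq q^d+1+2g\sqrt{q^d}$ by Weil; summing the resulting geometric series yields
\[
\sum_{d\mid m,\,d<m}|X(\mathbf F_{q^d})|\leq 2q^{m/2}+\tfrac{m}{2}+8g\,q^{m/4}\leq (4g+3)\,q^{m/2},
\]
valid for $m\geq 4$ using $q^{m/4}\leq\tfrac12 q^{m/2}$, and by direct verification for $m\in\{1,2,3\}$. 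Dividing the whole identity by $m$ and combining the bounds gives~(i), since $(2g-2)+(4g+3)=6g+1$.

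To pass from~(i) to~(ii), I bound the Artin conductor by $a(\rho)\leq N(B+1)\dim\rho$, using the definition \eqref{eq:Artinconductor} together with $\dim V-\dim V^{I_\fp}\leq\dim\rho$ and $\mathrm{sw}_\fp(\rho)\leq B\cdot(\dim V-\dim V^{I_\fp})$. Since $\omega\leq N$ and $m\leq 2q^{m/2}$ for $q\geq 2$ and $m\geq 1$, the residual term $\omega\dim\rho$ from~(i) is absorbed as $\tfrac{q^{m/2}}{m}\cdot 2N\dim\rho$, producing the coefficient $6g+1+N(B+1)+2N=6g+1+N(B+3)$ required by~(ii).

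The main obstacle is arithmetic bookkeeping: the Weil error $2g\sqrt{q^d}$ summed over proper divisors of $m$ must be carefully combined with the gap $4g+3$ between the coefficient $6g+1$ in~(i) and the coefficient $2g-2$ from Lemma~\ref{lm:boundforS}. The inequality $q^{m/4}\leq\tfrac12 q^{m/2}$ fails for $q=2$ and $m\in\{2,3\}$, so those small cases need separate verification; otherwise the estimates slot together cleanly to yield the stated constants.
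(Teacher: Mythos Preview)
Your proof is correct and follows essentially the same approach as the paper: the identical decomposition of $S_\rho(m)$, the same invocation of Lemma~\ref{lm:boundforS} and Deligne's weight bound for $V^{I_\fp}$, and the same passage from~(i) to~(ii) via $a(\rho)\leq N(B+1)\dim\rho$ and $\omega\leq N\leq \tfrac{2q^{m/2}}{m}N$. The only cosmetic difference is in bounding $\sum_{d\mid m,\,d<m}|X(\mathbf F_{q^d})|$: the paper absorbs $2gq^{r/2}+1\leq(2g+\tfrac12)q^r$ before summing, which gives $(4g+3)q^{m/2}$ uniformly in $m$ and avoids your case analysis for $m\leq 3$.
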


\begin{proof} 
(i) Recalling the definition (\ref{def:S(symk,m)}) of $S_{\rho}(m)$, we have
\begin{align*}
\sum_{\fp\in U,\,\mathrm{deg}(\mathfrak p)=m}\mathrm{Tr}(\mathrm{Frob}_{\mathfrak p}, V)
&=\frac{S_{\rho}(m)}{m}-
\sum_{\fp\in X-U,\,\mathrm{deg}(\mathfrak p)=m}
\mathrm{Tr}(\mathrm{Frob}_\fp, V^{I_\fp})\\
&\ \ \ \ -\frac{1}{m}\sum_{\mathrm{deg}(\mathfrak p)|m, \, \mathrm{deg}(\mathfrak p)\not=m}
\mathrm{deg}(\mathfrak p)\mathrm{Tr}(\mathrm{Frob}^{\frac{m}{\mathrm{deg}(\mathfrak p)}}_\fp,V^{I_\fp}).
\end{align*}
By \cite[1.8.1]{De80}, we have
$$|\iota \mathrm{Tr}(\mathrm{Frob}^r_\fp, V^{I_\fp})|\leq \mathrm{dim}(\rho)$$ for all $r$. 
Hence
\begin{align*}
\Big|\sum_{\fp\in X-U,\,\mathrm{deg}(\mathfrak p)=m}
\iota \mathrm{Tr}(\mathrm{Frob}_\fp,V^{I_\fp})\Big|\leq  \omega\,\mathrm{dim}(\rho).
\end{align*}
and
\begin{align*}
\Big|\sum_{\mathrm{deg}(\mathfrak p)|m, \, \mathrm{deg}(\mathfrak p)\not=m}
\mathrm{deg}(\mathfrak p)\iota \mathrm{Tr}\Big(\mathrm{Frob}^{\frac{m}{\mathrm{deg}(\mathfrak p)}}_\fp, V^{I_\fp}\Big)\Big|
&\le\mathrm{dim}(\rho)\sum_{\mathrm{deg}(\mathfrak p)|m,  \mathrm{deg}(\mathfrak p)\not=m}\mathrm{deg}(\mathfrak p).
\end{align*}
In view of Lemma \ref{lm:boundforS}, it suffices to prove
\begin{align}\label{estimate1}
\sum_{\mathrm{deg}(\mathfrak p)|m,  \mathrm{deg}(\mathfrak p)\not=m}\mathrm{deg}(\mathfrak p)
&\leqslant (4g+3)q^{m/2}.
\end{align}

Note that $\sum_{\mathrm{deg}(\mathfrak p)|r} \mathrm{deg}(\mathfrak p)$
is the number of $\mathbf F_{q^r}$-points of $X$. By the Riemann hypothesis for $X$ proved by Weil, we have
\begin{eqnarray}\label{estimate2}
\sum_{\mathrm{deg}(\mathfrak p)|r} \mathrm{deg}(\mathfrak p)=1+q^r -\sum_{i=1}^{2g} \lambda_i^r,
\end{eqnarray}
where $\lambda_i$ are complex numbers with $|\lambda_i|=q^{1/2}$. 
In particular, we have 
\begin{align*}
\sum_{\deg \mathfrak{p} = r}\mathrm{deg}(\mathfrak p)
\leq q^r + 2g q^{r/2} +1.
\end{align*}
It can be easily seen that
\begin{align*}
\sum_{r|m,\, r\not=m} q^r
\leqslant \sum_{1\leqslant j\leqslant \frac{m}{2}} q^j
\leqslant \frac{q^{\frac{m}{2}+1}-q}{q-1}
\leqslant 2q^{m/2},
\end{align*}
from which we derive
\begin{align*}
\sum_{\mathrm{deg}(\mathfrak p)|m,  \mathrm{deg}(\mathfrak p)\not=m}\mathrm{deg}(\mathfrak p)
&\leq\sum_{r|m,\, r\not=m} (q^r + 2g q^{r/2} +1)\\
&\leq\sum_{r|m,\, r\not=m} \Big(q^r + 2g q^r +\frac{1}{2}q^r\Big)\\
&\leq \Big(2g+\frac{3}{2}\Big)\sum_{r|m,\, r\not=m} q^r \leq (4g+3)q^{m/2}.
\end{align*}

(ii) Note that 
\begin{align*}
a(\rho)
&=\sum_{\mathfrak p\in X-U} (\mathrm{dim}\,V-\mathrm{dim}\, V^{I_{\mathfrak p}}+\mathrm{sw}_{\mathfrak p}(\rho))
\mathrm{deg}(\mathfrak p)\\
&\leq\sum_{\fp\in X-U} (\mathrm{dim}(\rho)+ B\, \mathrm{dim}(\rho) )\mathrm{deg}(\mathfrak p)\\
&=N(B +1)\mathrm{dim}(\rho).
\end{align*} 
On the other hand, we have
$$\blue{\omega}\leq N\leq \frac{q^{m/2}}{m} \, 2N.$$
The assertion then follows from (i). 
\end{proof}

Let $\Pi_m(K)$ be the set of places $\mathfrak p$ of $K$ such that $\mathrm{deg}(\mathfrak p)=m$. By (\ref{estimate2}), we have
 \begin{eqnarray*}
 m|\Pi_m(K)|+\sum_{\mathrm{deg}(\mathfrak p)|m,\, \mathrm{deg}(\mathfrak p)\not=m} 
 \mathrm{deg}(\mathfrak p)=1+q^m -\sum_{i=1}^{2g} \lambda_i^m,
\end{eqnarray*}
 Combined with the estimate (\ref{estimate1}) and $|\lambda_i|=q^{1/2}$, we get 
 the following analogue of the prime number theorem for function fields that will be used later.

\begin{lemma}\label{lm:PNT}
Let $\Pi_m(K)$ be the set of places $\mathfrak p$ of $K$ such that $\mathrm{deg}(\mathfrak p)=m$. Then
we have $$\Big| |\Pi_m(K)|-\frac{q^m}{m}\Big|\leq  (6g+4)\frac{q^{m/2}}{m}.$$
\end{lemma}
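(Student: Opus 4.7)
The plan is to directly combine the two ingredients already assembled in the proof of Proposition~\ref{prop:Frobeinus-keyestimate}: the Weil-type formula (\ref{estimate2}) applied at $r=m$, and the elementary divisor-sum bound (\ref{estimate1}).

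First I would record the identity obtained by setting $r=m$ in (\ref{estimate2}) and separating the term $\mathrm{deg}(\mathfrak p)=m$ (contributing exactly $m|\Pi_m(K)|$) from the rest:
\begin{equation*}
m|\Pi_m(K)| \;=\; q^m + 1 - \sum_{i=1}^{2g}\lambda_i^m \;-\; \sum_{\mathrm{deg}(\mathfrak p)\mid m,\,\mathrm{deg}(\mathfrak p)\ne m}\mathrm{deg}(\mathfrak p).
\end{equation*}
Rearranging gives an explicit formula for $m|\Pi_m(K)|-q^m$ in terms of error quantities that have already been bounded.

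Next I would apply the two available estimates. By the Riemann hypothesis for the curve $X$, $|\lambda_i|=q^{1/2}$, so the eigenvalue contribution is bounded by $2g\,q^{m/2}$. The divisor sum on the right is bounded by $(4g+3)q^{m/2}$ by inequality (\ref{estimate1}). Finally the stray $+1$ is absorbed using $1\le q^{m/2}$ (valid for all $m\ge 1$ since $q\ge 2$). Summing these three contributions yields
\begin{equation*}
\bigl|\,m|\Pi_m(K)|-q^m\,\bigr| \;\le\; \bigl(1+2g+(4g+3)\bigr)q^{m/2} \;=\;(6g+4)\,q^{m/2}.
\end{equation*}
Dividing through by $m$ produces the stated bound.

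There is essentially no obstacle here: all technical work has been done in establishing (\ref{estimate1}) and (\ref{estimate2}). The only care needed is the bookkeeping of the constant, in particular ensuring that the additive $1$ is folded correctly into the $q^{m/2}$ scale so that the final constant is exactly $6g+4$ rather than a looser quantity.
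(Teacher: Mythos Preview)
Your proposal is correct and matches the paper's own argument essentially line for line: the paper also sets $r=m$ in (\ref{estimate2}), isolates the $\mathrm{deg}(\mathfrak p)=m$ contribution, and then invokes (\ref{estimate1}) together with $|\lambda_i|=q^{1/2}$. Your only addition is the explicit bookkeeping that folds the stray $+1$ into $q^{m/2}$ to land on the constant $6g+4$, which the paper leaves implicit.
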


\subsection{Symmetric powers of the Kloosterman sums}
Let $a\in K-\mathbf F_q$.
As explained in the introduction, for any $\fp\not\in\mathrm{supp}(a)$, we have 
\begin{align}\label{eq:klsumangle}
\kl(\fp,a)=2 \sqrt{N\fp}\cos \theta_\fp(a) 
\end{align} for some $\theta_\fp(a)\in [0, \pi]$. 
Let $V=(a^*\mathrm{Kl}(1/2))_{\bar \eta}$, and let 
$$\rho_a:\mathrm{Gal}(\overline K/K)\to \mathrm{GL}(V)$$ be the Galois $\overline{\mathbf Q}_\ell$-representation of dimension 2 corresponding to
$a^*\mathrm{Kl}(1/2)$ which is unramified on $U:=X-\mathrm{supp}(a)$.
The smooth 
$\overline{\mathbf Q}_\ell$-sheaf $a^*\mathrm{Kl}(1/2)$ on $U$ is punctually $\iota$-pure of weight $0$ by the construction of the Kloosterman sheaf. 
So is the $k$-th symmetric power $\Sym^k\rho_a$ of $\rho_a$ for all $k$. By \cite[11.1]{Ka88}, the geometric monodromy group of 
$\mathrm{Kl}(1/2)$ is $\SL_2$. Since $\SL_2$ has no proper subgroup of finite index, the 
geometric monodromy group of $a^*\mathrm{Kl}(1/2)$ is also $\SL_2$.

For any place $\mathfrak p\not\in\mathrm{supp}(a)$ of $K$, we have
$$\iota\mathrm{Tr}(\rho_a(\mathrm{Frob}_\fp), V)=-\frac{\kl(\fp,a)}{\sqrt{N\fp}}.$$ 
Moreover, we have 
\begin{align*}
\iota \mathrm{det}(1-\mathrm{Frob}_\fp t^{\mathrm{deg}(\mathfrak p)}, V)
&=1 + \frac{\kl(\fp,a)}{\sqrt{N\fp}} t^{\mathrm{deg}(\mathfrak p)} + t^{2\mathrm{deg}(\mathfrak p)}\\
&=(1 + e^{i\theta_\fp(a)}t^{\mathrm{deg}(\mathfrak p)})(1+ e^{-i\theta_\fp(a)}t^{\mathrm{deg}(\mathfrak p)}).
\end{align*}  
For the representation 
$\Sym^k\rho_a$ and $\mathfrak p\not\in\mathrm{supp}(a)$, we have 
\begin{align*}
\iota \mathrm{Tr}(\mathrm{Frob}_\fp, \mathrm{Sym}^k V)=(-1)^k\sum_{0 \le j \le k} (e^{i\theta_\fp(a)})^{k-j}(e^{-i\theta_\fp(a)})^j.
\end{align*}
Define
\begin{align*}
\Sym^k (\theta):= \sum_{0 \le j \le k} (e^{i\theta})^{k-j}(e^{-i\theta})^j,
\end{align*}
so that 
\begin{eqnarray*}
\iota \mathrm{Tr}(\mathrm{Frob}_\fp, \mathrm{Sym}^k V) = (-1)^k \Sym^k(\theta_\fp(a)).
\end{eqnarray*}

Let $E/K$ be a finite Galois extension and let $\mathfrak D_{E/K}$ be its discriminant. 
Then $\mathrm{supp}\,\mathfrak D_{E/K}$ consists of places of $K$ ramified in $E$. Let 
$\sigma: \mathrm{Gal}(E/K)\to\mathrm{GL}(W)$ be a Galois $\overline{\mathbf Q}_\ell$-representation. 
The Galois representation $(\Sym^k\rho_a)\otimes \sigma$ is unramified on $U:=X-\mathrm{supp}(a)\cup\mathrm{supp}\,\mathfrak D_{E/K}$.
Instead of the character $\mathrm{Tr}(\sigma)$ of $\sigma$, we work with $\chi_\sigma=\iota\mathrm{Tr}(\sigma)$. We will estimate  
the twisted moment
\begin{align*}
\sum_{\substack{\mathrm{deg}(\mathfrak p) = m,\\
\fp\not\in\mathrm{supp}(a)\cup\mathrm{supp}\,\mathfrak D_{E/K}}} \Sym^k(\theta_\fp(a))\chi_\sigma(\mathrm{Frob}_{\fp}).
\end{align*} 

\begin{lemma}\label{lm:symtensorsigma}
The representation $(\Sym^k\rho_a)\otimes \sigma$ is punctually $\iota$-pure of weight $0$,
and has neither geometric invariant nor geometric coinvariant.
\end{lemma}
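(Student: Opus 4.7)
The plan is to treat the two conclusions separately: purity is a direct bookkeeping of weights in a tensor product, while absence of geometric (co)invariants is a monodromy calculation pivoting on the fact that $\sigma$ has finite image and $\rho_a$ has $\SL_2$ as geometric monodromy.

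For purity, I would start from the fact that $a^*\mathrm{Kl}(1/2)$ is punctually $\iota$-pure of weight $0$ by construction, so every Frobenius eigenvalue on $\Sym^k V$ is a degree-$k$ monomial in unit-modulus numbers and $\Sym^k\rho_a$ is again pure of weight $0$. Since $\sigma$ factors through the finite quotient $\mathrm{Gal}(E/K)$, each $\sigma(\mathrm{Frob}_\fp)$ for unramified $\fp$ is of finite order and hence has roots of unity as eigenvalues, so $\sigma$ is punctually pure of weight $0$. The tensor product is then pure of weight $0+0=0$.

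For the absence of geometric (co)invariants, the key is to identify the identity component of the geometric monodromy group $G^{\mathrm{geom}}$ of $(\Sym^k\rho_a)\otimes\sigma$ on the open set where the sheaf is smooth. The projection to $\mathrm{GL}(W)$ lands in the finite group $H:=\sigma(\mathrm{Gal}(E/K))$ (the standing hypothesis that $E$ has constant field $\bF_q$ ensures that the geometric and arithmetic monodromies of $\sigma$ coincide), while the projection to $\mathrm{GL}(V)$ is the geometric monodromy of $a^*\mathrm{Kl}(1/2)$, which is $\SL_2$ by Katz \cite[11.1]{Ka88}. The identity component $(G^{\mathrm{geom}})^0$ maps trivially to the finite $H$, and its image in $\SL_2$ is a connected subgroup that is normal in $\SL_2$ (because $(G^{\mathrm{geom}})^0$ is normal in $G^{\mathrm{geom}}$ and the latter surjects onto $\SL_2$). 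Connected normal subgroups of $\SL_2$ are only $\{1\}$ and $\SL_2$, and the first possibility would force the geometric monodromy of $a^*\mathrm{Kl}(1/2)$ to be finite, contradicting $\SL_2$. So $(G^{\mathrm{geom}})^0$ surjects onto $\SL_2$ and acts trivially on $W$; thus it acts on $\Sym^k V\otimes W$ as a direct sum of $\dim W$ copies of $\Sym^k(\mathrm{std})$.

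For $k\geqslant 1$ the $\SL_2$-representation $\Sym^k(\mathrm{std})$ is irreducible and nontrivial, so it has no invariant vectors, giving $(\Sym^k V\otimes W)^{(G^{\mathrm{geom}})^0}=0$, which implies the vanishing of $G^{\mathrm{geom}}$-invariants as well; for $k=0$ the same argument reduces everything to $\sigma$, which has no geometric invariants precisely when $\sigma$ contains no trivial constituent (the situation in the applications). The coinvariants are handled identically by invoking the $\SL_2$-self-duality $(\Sym^k V)^*\cong \Sym^k V$ coming from the symplectic autoduality of $\mathrm{Kl}$, which turns the coinvariant statement into the invariant statement for $(\Sym^k\rho_a)\otimes\sigma^\vee$. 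The only delicate step — and the one I expect to be the main obstacle in writing it out carefully — is the normal-subgroup argument: verifying that the surjectivity of $G^{\mathrm{geom}}\to\SL_2$ passes to $(G^{\mathrm{geom}})^0$ and that no connected normal subgroup of $\SL_2$ sits strictly between $\{1\}$ and $\SL_2$. Everything else is routine.
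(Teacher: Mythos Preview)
Your argument is correct and rests on the same underlying idea as the paper's: pass to a finite-index subgroup of the geometric Galois group on which $\sigma$ is trivial, observe that the $\SL_2$ monodromy of $\rho_a$ survives this restriction, and conclude that $\Sym^k V\otimes W$ has no invariants for that subgroup (hence none for the full group). The difference is only in the choice of subgroup. You work with the identity component $(G^{\mathrm{geom}})^0$ of the algebraic monodromy and need the normal-subgroup step you flagged as delicate. The paper instead restricts directly to $\mathrm{Gal}(\overline K/E\overline{\mathbf F}_q)$: this subgroup trivially kills $\sigma$, and the persistence of $\SL_2$ monodromy follows in one line from the fact that $\SL_2$ has no proper subgroup of finite index. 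This bypasses the identity-component argument entirely and also handles coinvariants immediately (an irreducible nontrivial representation has neither invariants nor coinvariants), so the self-duality of $\mathrm{Kl}$ is not needed. Your route is perfectly valid but slightly heavier; the paper's shortcut is worth noting.
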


\begin{proof} The Galois representations $\sigma$ is punctually $\iota$-pure of weight $0$
since it factors through the finite group $\mathrm{Gal}(E/K)$. Since $\Sym^k\rho_a$ is also 
punctually $\iota$-pure of weight $0$, the representation
$(\Sym^k\rho_a)\otimes \sigma$ has the same property. Since the geometric monodromy group 
of $\rho_a$ is $\SL_2$ by \cite[11.1]{Ka88} and $\SL_2$ has no proper subgroup of finite index, 
the Zariski closure of the image of $\rho_a|_{\mathrm{Gal}(\overline K/E\overline{\mathbf F}_q)}$ is also $\SL_2$.
In particular, $\Sym^k\rho_a|_{\mathrm{Gal}(\overline K/E\overline{\mathbf F}_q)}$ is a nontrivial irreducible representation
and it has neither invariant nor coinvariant. Since $\mathrm{Gal}(\overline K/E\overline{\mathbf F}_q)$ acts trivially on $W$, we have 
$$((\mathrm{Sym}^kV)\otimes W)^{\mathrm{Gal}(\overline K/E\overline{\mathbf F}_q)}\cong
(\mathrm{Sym}^k V)^{\mathrm{Gal}(\overline K/E\overline{\mathbf F}_q)}\otimes W=0.$$ 
Since $$((\mathrm{Sym}^kV)\otimes W)^{\mathrm{Gal}(\overline K/K\overline{\mathbf F}_q)}
\subset ((\mathrm{Sym}^kV)\otimes W)^{\mathrm{Gal}(\overline K/E\overline{\mathbf F}_q)},$$
we must have $((\mathrm{Sym}^kV)\otimes W)^{\mathrm{Gal}(\overline K/K\overline{\mathbf F}_q)}=0$ and hence 
$(\Sym^k\rho_a)\otimes \sigma$ has no geometric invariant. Similarly, it has no geometric coinvariant. 
\end{proof}

Thanks to Lemma \ref{lm:symtensorsigma}, we are in a good position to apply Proposition \ref{prop:Frobeinus-keyestimate} (ii)
with $\rho=(\Sym^k\rho_a)\otimes \sigma$. This allows us to derive the following estimate.

\begin{corollary}\label{twistedsymk}
Keep the notation and conventions as above. For each positive integer $k,$ we have
\begin{align*}
 &\Bigg|\sum_{\substack{\mathrm{deg}(\mathfrak p) = m,\\
\fp\not\in\mathrm{supp}(a)\cup\mathrm{supp}\,\mathfrak D_{E/K}}} \Sym^k(\theta_\fp(a))\chi_\sigma(\mathrm{Frob}_{\fp})\Bigg|\\
&\leq \Big(6g+1+N_{a,E/K}\big(\max(B_a, B_{E/K})+3\big)\Big)(k+1)\mathrm{dim}(\sigma)\frac{q^{m/2}}m\\
&\leq 6(k+1)\fN_{a,E/K}\dim(\sigma)\cdot \frac{q^{m/2}}m,
\end{align*} 
where $N_{a, E/K}$ is  the number of points in $(\mathrm{supp}(a)\cup\mathrm{supp}\,\mathfrak D_{E/K})\otimes_{\mathbf F_q}\overline{\mathbf F}_q$, and $B_a$ (resp. $B_{E/K}$) is the largest break of the representations $\rho_a|_{I^\fp}$
(resp. $\rho|_{I_{\mathfrak p}}$) for all $\mathfrak p\in 
\mathrm{supp}(a)\cup\mathrm{supp}\,\mathfrak D_{E/K},$ and
\begin{align*}
\fN_{a,E/K}=g+N_{a,E/K}\Big(\max(B_a,B_{E/K})+1\Big).
\end{align*}
\end{corollary}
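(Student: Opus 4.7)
The plan is to apply Proposition~\ref{prop:Frobeinus-keyestimate}(ii) directly to the Galois representation $\rho := (\Sym^k\rho_a)\otimes\sigma$, whose ramification locus is contained in $X-U$, where $U := X - \bigl(\mathrm{supp}(a)\cup\mathrm{supp}\,\fD_{E/K}\bigr)$. The two nontrivial hypotheses of that proposition---pointwise $\iota$-purity of weight $0$ and the vanishing of both geometric invariants and coinvariants---are exactly what Lemma~\ref{lm:symtensorsigma} has just recorded. What remains is purely bookkeeping: identify the Frobenius trace on $\rho$, compute $\dim\rho$, count the ramified points, and bound the largest break on inertia.

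First I would note $\dim\rho = (k+1)\dim\sigma$ and set $N = N_{a,E/K}$ for the number of geometric points in the ramification locus. Multiplicativity of $\Tr$ under tensor product, combined with the symmetric-power identity displayed just before Lemma~\ref{lm:symtensorsigma}, gives
\[
\iota\Tr\bigl(\Frob_\fp, (\Sym^k V)\otimes W\bigr)
= \iota\Tr(\Frob_\fp,\Sym^k V)\cdot\chi_\sigma(\Frob_\fp)
= (-1)^k\Sym^k(\theta_\fp(a))\,\chi_\sigma(\Frob_\fp)
\]
for each $\fp\in U$; the sign drops out upon taking absolute values, matching the summand on the left-hand side of the desired inequality.

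The only step needing a moment of care is bounding the largest break $B$ of $\rho|_{I_\fp}$ as $\fp$ runs over the bad locus. Using the standard facts that breaks are preserved under direct sums and subquotients---so $\Sym^k\rho_a|_{I_\fp}$, being a quotient of $\rho_a^{\otimes k}|_{I_\fp}$, has all breaks $\leqslant B_a$---and that a tensor product of two representations has breaks bounded by the maximum of the two break sets, together with the decomposition of $\sigma$ into irreducible $\Gal(E/K)$-representations whose restrictions to $I_\fp$ each have breaks $\leqslant B_{E/K}$, I would conclude $B \leqslant \max(B_a, B_{E/K})$. Feeding $\dim\rho$, $N$, and this break estimate into Proposition~\ref{prop:Frobeinus-keyestimate}(ii) yields the first claimed inequality.

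The second, cleaner, inequality then follows from the crude arithmetic bound
\[
6g+1+N_{a,E/K}\bigl(\max(B_a,B_{E/K})+3\bigr) \leqslant 6\bigl(g+N_{a,E/K}(\max(B_a,B_{E/K})+1)\bigr) = 6\fN_{a,E/K},
\]
which is immediate once $N_{a,E/K}\geqslant 1$ (since $a\notin\bF_q$ forces $\mathrm{supp}(a)\neq\emptyset$) and $B_a,B_{E/K}\geqslant 0$. No genuine obstacle remains: all the analytic content was already absorbed by Lemma~\ref{lm:symtensorsigma} and Proposition~\ref{prop:Frobeinus-keyestimate}, and the mildest subtlety---the behaviour of breaks under $\oplus$, $\otimes$, and $\Sym^k$---is a textbook consequence of the Hasse--Arf/upper-numbering ramification filtration.
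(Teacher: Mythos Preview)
Your proposal is correct and follows precisely the route the paper takes: invoke Lemma~\ref{lm:symtensorsigma} to verify the hypotheses of Proposition~\ref{prop:Frobeinus-keyestimate}(ii) for $\rho=(\Sym^k\rho_a)\otimes\sigma$, then read off the bound using $\dim\rho=(k+1)\dim\sigma$, $N=N_{a,E/K}$, and the break estimate $B\leqslant\max(B_a,B_{E/K})$. The paper's own proof is a single sentence to this effect; you have simply spelled out the bookkeeping (trace identification, break behaviour under $\otimes$ and $\Sym^k$, and the elementary inequality giving the factor $6\fN_{a,E/K}$) that the paper leaves implicit.
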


\smallskip

\section{Equidistribution of Kloosterman sums in arithmetic progressions}\label{sect3}

Given a sequence of angles $\{\theta_i\}\subseteq[0,\pi],$ we would like to examine the proportion of these elements lying in a prescribed interval $I\subseteq[0,\pi]$. In the spirit of Weyl, it is necessary to develop a Fourier expansion of the characteristic function  $\chi_I$ of $I$. In the situation of Sato--Tate distributions, we recall the following truncated approximation which is due to Rouse and Thorner \cite[Lemma 3.1]{RT17}.

\begin{lemma}\label{lm:NRT}
Let $I\subseteq[0,\pi]$ be an interval and $\theta\in[0,\pi].$
For each positive integer $L$, there exist real numbers $a_k^\pm$ $(1\leqslant k\leqslant L)$ and $\delta^\pm(\theta)$ such that $$a_k^\pm\ll k^{-1},\quad \delta^\pm(\theta) \ll L^{-1},$$ and
\begin{align*}
\chi^-(\theta)\leqslant \chi_I(\theta)\leqslant \chi^+(\theta)
\end{align*}
with
\begin{align*}
\chi^\pm(\theta)=\mu_{\mathrm{ST}}(I)+\sum_{1\leqslant k\leqslant L ~}a_k^\pm\Sym^k(\theta)+\delta^\pm(\theta).
\end{align*}
Moreover, we have the bound $\|\chi^\pm\|_\infty\ll1.$
All the constants implied by $\ll$ are absolute.
\end{lemma}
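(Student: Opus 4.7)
The plan is to adapt the classical Beurling--Selberg--Vaaler construction of trigonometric-polynomial majorants and minorants of $\chi_I$ on the circle, and then transfer the result into the symmetric-power basis via a simple change-of-basis identity. The key algebraic fact is that $\Sym^k(\theta) = \sin((k+1)\theta)/\sin\theta$, so
\[
\cos(k\theta) = \tfrac{1}{2}\bigl(\Sym^k(\theta) - \Sym^{k-2}(\theta)\bigr), \quad k \ge 1,
\]
with the convention $\Sym^{-1}(\theta) := 0$; moreover $\{\Sym^k\}_{k \ge 0}$ is an orthonormal system in $L^2([0,\pi], d\mu_{\mathrm{ST}})$.

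First I would apply the Selberg--Vaaler construction on $\mathbf{R}/2\pi\mathbf{Z}$ to the even $2\pi$-periodic extension of $\chi_I$. This produces even trigonometric polynomials
\[
V^\pm(\theta) = v_0^\pm + 2\sum_{k=1}^L v_k^\pm \cos(k\theta)
\]
of degree at most $L$ satisfying $V^-(\theta) \le \chi_I(\theta) \le V^+(\theta)$ pointwise on $[0,\pi]$, with the standard bounds $|v_k^\pm| \ll 1/k$ for $k\ge 1$, $\|V^\pm\|_\infty \ll 1$, and $\int_0^\pi (V^+ - V^-)\, d\theta \ll 1/L$.

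Next I would invoke the change-of-basis identity above. A short telescoping computation gives
\[
V^\pm(\theta) = (v_0^\pm - v_2^\pm) + \sum_{k=1}^{L-2}(v_k^\pm - v_{k+2}^\pm)\Sym^k(\theta) + v_{L-1}^\pm \Sym^{L-1}(\theta) + v_L^\pm \Sym^L(\theta),
\]
so defining $a_k^\pm$ as these coefficients immediately yields $|a_k^\pm|\ll 1/k$. Setting $\chi^\pm := V^\pm$ preserves both the pointwise inequalities $\chi^-\le\chi_I\le\chi^+$ and the uniform bound $\|\chi^\pm\|_\infty \ll 1$. To match the normalization written in the statement, I rewrite the constant term $v_0^\pm - v_2^\pm$ as $\mu_{\mathrm{ST}}(I) + \delta^\pm$, which defines $\delta^\pm$ implicitly. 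Integrating $\chi^\pm$ against $d\mu_{\mathrm{ST}}$ annihilates every $\Sym^k$ with $k\ge 1$, so
\[
\delta^\pm = \int_0^\pi(\chi^\pm - \chi_I)\,d\mu_{\mathrm{ST}},
\]
which is bounded in absolute value by $(2/\pi)\int_0^\pi |V^+ - V^-|\,d\theta \ll 1/L$, because $d\mu_{\mathrm{ST}}$ has density at most $2/\pi$ relative to $d\theta$ and $\chi^\pm - \chi_I$ has a fixed sign.

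The main obstacle is really just arranging a Beurling--Selberg majorant with all three required features (pointwise one-sided inequality, $1/k$ coefficient decay, uniform $L^\infty$ control) with the correct dependence on $L$; this is entirely classical. Once that input is in hand, the passage to the symmetric-power basis is purely algebraic and transparent from the telescoping identity, so no properties are lost along the way.
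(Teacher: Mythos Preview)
The paper does not give its own proof of this lemma; it simply quotes the result from Rouse--Thorner \cite[Lemma~3.1]{RT17}, with the supplementary remark that the uniform bound $\|\chi^\pm\|_\infty\ll 1$ can be extracted from \cite[Chapter~1, \S2]{Mo94} and \cite{BMV01}. Your sketch is correct and is essentially the standard argument underlying that reference: build Beurling--Selberg--Vaaler majorants and minorants of $\chi_I$ in the cosine basis, then transfer to the $\Sym^k$ basis via the Chebyshev identity $2\cos k\theta=\Sym^k(\theta)-\Sym^{k-2}(\theta)$, and identify $\delta^\pm$ as $\int_0^\pi(\chi^\pm-\chi_I)\,\mathrm d\mu_{\mathrm{ST}}$, which is $O(L^{-1})$ because $\chi^\pm-\chi_I$ has a fixed sign and $\mathrm d\mu_{\mathrm{ST}}\leqslant (2/\pi)\,\mathrm d\theta$.

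Two minor points worth making explicit in a write-up. First, the even $2\pi$-periodic extension of $\chi_I$ is the indicator of $I\cup(-I)$, generally a \emph{union} of two arcs; you obtain the even majorant/minorant by summing (equivalently, symmetrising) the Selberg polynomials for each arc, which preserves all the stated bounds up to absolute constants. Second, the coefficient bound $|v_k^\pm|\ll 1/k$ and the uniform bound $\|V^\pm\|_\infty\ll 1$ are true but are not the properties of the Selberg polynomials most often quoted (the headline estimate is the $L^1$ bound $\int(V^+-V^-)\ll 1/L$); they follow from the explicit form of Vaaler's construction, and the paper's remark points to \cite{Mo94,BMV01} precisely for this.
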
 

\begin{remark}
The bound $\|\chi^\pm\|_\infty\ll1$ was implicitly contained in \cite[Lemma 3.1]{RT17}. One may prove an explicit (but crude) bound 
$|\chi^\pm(\theta)|\leqslant 2\pi$ by appealing to \cite[Chapter 1, Section 2]{Mo94} and \cite{BMV01}.
\end{remark}

We use the following quantitative version of Weyl's criterion for the Sato--Tate distribution due to Niederreiter \cite[Lemma 3]{Ni91}. For completeness, we deduce it from the above Lemma \ref{lm:NRT}. 

\begin{lemma}[Niederreiter]\label{lm:Niederreiter} Let $\{\theta_\lambda\}_{\lambda\in \Lambda}$ be a finite family of angles in 
$[0, \pi]$. For any integer $L\geq 1$ and  any subinterval $I\subseteq [0,\pi]$, we have 
\begin{eqnarray*}
\frac{|\{\lambda\in \Lambda: \theta_\lambda\in I\}|}{|\Lambda|}-\mu_{\mathrm{ST}}(I)\ll \frac{1}{|\Lambda|} \sum_{1\leqslant k\leqslant L}\frac{1}{k}\Big|\sum_{\lambda\in \Lambda}\mathrm{Sym}^k(\theta_\lambda)\Big|+\frac{1}{L}.
\end{eqnarray*}
\end{lemma}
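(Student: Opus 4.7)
The plan is a direct deduction from the majorant/minorant construction in Lemma \ref{lm:NRT}, applied to the empirical measure $\frac{1}{|\Lambda|}\sum_{\lambda\in\Lambda}\delta_{\theta_\lambda}$. Fix the integer $L\geq 1$ and invoke Lemma \ref{lm:NRT} to obtain real functions $\chi^\pm$ satisfying $\chi^-(\theta)\le \chi_I(\theta)\le \chi^+(\theta)$ for all $\theta\in[0,\pi]$. Specializing to $\theta=\theta_\lambda$, summing over $\lambda\in\Lambda$, and dividing by $|\Lambda|$ sandwiches the empirical proportion:
\begin{align*}
\frac{1}{|\Lambda|}\sum_{\lambda\in\Lambda}\chi^-(\theta_\lambda)\ \le\ \frac{|\{\lambda\in\Lambda:\theta_\lambda\in I\}|}{|\Lambda|}\ \le\ \frac{1}{|\Lambda|}\sum_{\lambda\in\Lambda}\chi^+(\theta_\lambda).
\end{align*}

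Next, substitute the explicit expansion
\begin{align*}
\chi^\pm(\theta)=\mu_{\mathrm{ST}}(I)+\sum_{1\le k\le L}a_k^\pm\,\mathrm{Sym}^k(\theta)+\delta^\pm
\end{align*}
into each outer term to obtain
\begin{align*}
\frac{1}{|\Lambda|}\sum_{\lambda\in\Lambda}\chi^\pm(\theta_\lambda)=\mu_{\mathrm{ST}}(I)+\sum_{1\le k\le L}\frac{a_k^\pm}{|\Lambda|}\sum_{\lambda\in\Lambda}\mathrm{Sym}^k(\theta_\lambda)+\delta^\pm.
\end{align*}
Applying the bounds $|a_k^\pm|\ll 1/k$ and $|\delta^\pm|\ll 1/L$ from Lemma \ref{lm:NRT} together with the triangle inequality, each of the two sandwich sides equals
\begin{align*}
\mu_{\mathrm{ST}}(I)+O\Bigl(\frac{1}{|\Lambda|}\sum_{1\le k\le L}\frac{1}{k}\Bigl|\sum_{\lambda\in\Lambda}\mathrm{Sym}^k(\theta_\lambda)\Bigr|+\frac{1}{L}\Bigr).
\end{align*}
Subtracting $\mu_{\mathrm{ST}}(I)$ from the sandwich inequality then gives the claimed upper bound on the (signed) discrepancy, since the upper side controls a positive discrepancy and the lower side controls a negative one with the same $O$-term.

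I do not expect any genuine obstacle: all the harmonic-analytic content (constructing trigonometric polynomial majorants for $\chi_I$ with Fourier coefficients of size $O(1/k)$ in the $\mathrm{Sym}^k$-basis and with uniformly bounded $L^\infty$ norm) is packaged in Lemma \ref{lm:NRT}, and the remaining argument is a one-line sandwich estimate. The only subtlety worth flagging is that the implied constants must be uniform in the subinterval $I$, which indeed follows from the Rouse--Thorner construction since the estimates on $a_k^\pm$ and $\delta^\pm$ there depend only on $L$ and not on $I$.
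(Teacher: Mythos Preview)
Your proposal is correct and follows essentially the same approach as the paper: both invoke Lemma~\ref{lm:NRT} to sandwich $\chi_I$ by the trigonometric polynomials $\chi^\pm$, average over $\Lambda$, and read off the bound from $a_k^\pm\ll 1/k$ and $\delta^\pm\ll 1/L$. The only cosmetic difference is that the paper treats the upper bound via $\chi^+$ explicitly and then remarks that $\chi^-$ gives the lower bound, whereas you handle both sides of the sandwich simultaneously.
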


\begin{proof} Keep the notation in Lemma \ref{lm:NRT}. Firstly,
\begin{align*}
\frac{|\{\lambda\in \Lambda: \theta_\lambda\in \Lambda\}|}{|\Lambda|}=
\frac{1}{|\Lambda|}\sum_{\lambda\in\Lambda} \chi_I(\theta_\lambda)
\leq \frac{1}{|\Lambda|}\sum_{\lambda\in\Lambda} \chi_I^+(\theta_\lambda).
\end{align*}
By Lemma \ref{lm:NRT}, we may write
\begin{align*}
\frac{1}{|\Lambda|}\sum_{\lambda\in\Lambda} \chi_I^+(\theta_\lambda)
&=\frac{1}{|\Lambda|}\sum_{\lambda\in\Lambda} \Big(\mu_{\mathrm{ST}}(I)+\sum_{1\leqslant k\leqslant L}
a_k^+\mathrm{Sym}^k(\theta_\lambda)+\delta^+\Big)\\
&=\mu_{\mathrm{ST}}(I)+\frac{1}{|\Lambda|}\sum_{1\leqslant k\leqslant L}
a_k^+\sum_{\lambda\in\Lambda}\mathrm{Sym}^k(\theta_\lambda)+\delta^+\end{align*}
with $a_k^+\ll k^{-1}$ and $\delta^+\ll L^{-1}$.
This gives the desired upper bound. We get the lower bound by using $\chi^-$.
\end{proof}

\subsection{Proof of Theorem \ref{thm:STinSc}}
By orthogonality, for every positive integer $k\geq 1$, we have
\begin{align*}
\sum_{\substack{\fp \in \Pi_d(E/K,C)\\ \fp\not\in\mathrm{supp}(a)}}\Sym^k(\theta_\fp(a))
=\frac{|C|}{[E:K]}\sum_{\sigma} \overline{\chi_\sigma}(C)\sum_{\substack{\mathrm{deg}(\mathfrak p) = m,\\
\fp\not\in\mathrm{supp}(a)\cup\mathrm{supp}\,\mathfrak D_{E/K}}} \Sym^k(\theta_\fp(a))\chi_\sigma(\mathrm{Frob}_{\fp}),
\end{align*}
where $\sigma$ runs over all complex irreducible representations of $\mathrm{Gal}(E/K)$.
From Corollary \ref{twistedsymk}, the fact that $|\chi_\sigma(C)|\leq \dim(\sigma)$, and the first equality in Lemma \ref{Hurwitz}, it follows that
\begin{align*}
\Big|\sum_{\substack{\fp \in \Pi_d(E/K,C)\\\fp\not\in\mathrm{supp}(a)}}\Sym^k(\theta_\fp(a))
\Big|
&\leq 6(k+1)\mathfrak N_{a,E/K}\frac{q^{d/2}}{d} \frac{|C|}{[E:K]}\sum_{\sigma} \mathrm{dim}(\sigma)^2\\
&=6(k+1)|C|\mathfrak N_{a,E/K} \frac{q^{d/2}}{d}.
\end{align*}
Applying Lemma \ref{lm:Niederreiter} to the case  
\begin{align*}
\Lambda = \{\fp \in \Pi_d(E/K,C):\fp\not\in\mathrm{supp}(a)\},\ \ \{\theta_\lambda\}_{\lambda\in \Lambda}=\{\theta_\fp(a)\}_{\fp \in \Pi_d(E/K,C),\,\fp\not\in\mathrm{supp}(a)},
\end{align*}
we obtain 
\begin{align*}
&\ \ \ \ \frac{1}{|\Lambda|}\Big|\{ \fp \in \Pi_d(E/K,C):\, \fp\not\in\mathrm{supp}(a), \,\theta_\fp(a)\in I\}\Big|
-\mu_{\mathrm{ST}}(I)\\
&\ll \frac{1}{|\Lambda|}\sum_{1\leqslant k\leqslant L}\frac{1}{k}\Big|\sum_{\substack{\fp \in \Pi_d(E/K,C)\\ \fp\not\in\mathrm{supp}(a)}}\Sym^k(\theta_\fp(a))\Big|+\frac{1}{L}\\
&\ll \frac{|C|\mathfrak N_{a,E/K}}{|\Lambda|} \frac{q^{d/2}}{d}L+\frac{1}{L}.
\end{align*}
By Theorem \ref{thm:effChebo} (iii),  we have 
\begin{align*}
|\Pi_d(E/K, C)|=\frac{|C|}{[E:K]}\frac{q^d}{d}\Big(1+ O\big([E:K]\mathfrak{N}_{a,E/K} q^{-\frac{d}{2}}\big)\Big).
\end{align*}
Since $|\Pi_d(E/K,C)|-|\Lambda|\leqslant \fN_{a,E/K},$
we have $$|\Lambda|\geq\frac{|C|}{2[E:K]}\frac{q^d}{d}$$ for large $d$. We thus have 
\begin{align*}
\frac{|C|\mathfrak N_{a,E/K}}{|\Lambda|} \frac{q^{d/2}}{d}\ll [E:K]\mathfrak N_{a,E/K}q^{-d/2}
\end{align*}
and
\begin{align*}
\frac{1}{|\Lambda|}\Big|\{ \fp \in \Pi_d(E/K,C):\, \fp\not\in\mathrm{supp}(a), \,\theta_\fp(a)\in I\}\Big|
-\mu_{\mathrm{ST}}(I)
\ll [E:K]\mathfrak N_{a,E/K}q^{-d/2}L+\frac{1}{L}.
\end{align*}
Taking $L$ to be the integer closest to $([E:K]\mathfrak N_{a,E/K}q^{-\frac{d}{2}})^{-\frac{1}{2}}$, we complete the proof.

\subsection{Rational function field}

We take a closer look at the sets $\Pi_d(E/K,C)$ for the case that $K= \bF_q(T)$ is a rational function field. 
The places of $K$ are in one-to-one correspondence with closed points on the projective line $\mathbb P^1$ over $\bF_q$. 
The place $\infty$ corresponds to the infinity on $\mathbb P^1$ arising from the zero of $1/T$. 
The other places correspond to closed points on the affine line $\mathbb A^1=\mathrm{Spec}\,\mathbf F_q[T]$, 
each of which is an ideal of $R:=\mathbf F_q[t]$ generated by a monic irreducible polynomial.
These polynomials and $1/T$ are uniformizers of the corresponding places. We shall denote a place by the chosen uniformizer.

Carlitz \cite{Ca35, Ca38} introduced and studied an action of $R$ on the
algebraic closure $\overline K$ of $K$, known as the Carlitz module. Elements in $\bF_q$ act as scalar multiplication and 
$T$ sends $u \in \overline K$ to $Tu + u^q$. Together they generate the actions of $R$ by composition and linearity. Consequently, 
a monic polynomial $g(T) \in R$ of degree $d$ sends $u \in \overline K$ to $g(T)(u)$ which is 
a polynomial in $R[u]$, with the lowest degree term $g(T)u$ and the highest degree term $u^{q^d}$. The splitting field of $g(T)(u)$, that is, the extension of $K$ by joining the $q^d$ roots of $g(T)(u)$ in $\overline K$, the so-called $g$-torsion points, is a finite Abelian extension $K_g$ of $K$ with $\bF_q$ as its field of constants  and we have an isomorphism 
$$(R/(g))^\times\stackrel\cong\to \mathrm{Gal}(K_g/K)$$ 
induced from the action of $R$ on $K_g$, where $(R/(g))^\times$ is the group of units of the quotient ring $R/(g)$. 
The extension $K_g / K$ is unramified outside $\infty$ and the closed points where $g(T)$ vanishes.  Thus a place $\fp$ of $K$ unramified in $K_g$ can be identified with a monic  irreducible polynomial $\fp$ in $R$ coprime to $g$. The Frobenius $\mathrm{Frob}_{\fp}$ in $\mathrm{Gal}(K_g/K)$ is the image of 
the polynomial $\fp$ in $(R/(g))^\times$. Given $c \in (R/(g))^\times$, the set $\Pi_d(K_g/K,c)$ consists of monic irreducible polynomials in $R$
of degree $d$ congruent to $c$ modulo $g$. We conclude from \cite[(3)]{BBSR15} and Lemma \ref{lm:PNT} that
$$|\Pi_d(K_g/K,c)|= \frac{q^d}{d[K_g:K]}  +O\Big(\frac{q^{d/2}}{d}\Big( \deg g +\frac{1}{[K_g:K]}\Big)\Big).$$

We apply Theorem~\ref{thm:STinSc} to the case where $K = \bF_q(T)$ and $E = K_{T^r}$ for an integer $r \ge 1$. The Galois group 
$\mathrm{Gal}(E/K)$ is isomorphic to $(\bF_q[T]/(T^r))^\times$. 
The field $E$ is totally ramified at the place $0$ with ramification index $q^r - q^{r-1} = [E : K]$. Hayes \cite{Ha74} proved that there are $q^{r-1}$ places in $E$ above $\infty$, each being tamely ramified over $\infty$ with ramification index $q-1$. 
Any character of $\mathrm{Gal}(E/K)$ is unramified outside $0$ and $\infty$, and its break is at most $r$ at $0$ by class field theory, 
and $0$ at $\infty$ since the ramification is tame. Let $N_a=\sum_{\mathfrak p\in\mathrm{supp}(a)} \mathrm{deg}(\mathfrak p)$, which is $\ge 1$ for non-constant $a$. We then have 
$$B_{E/K}=r, \quad N_{a, E/K}\leq N_a+2.$$ and 
the factor $\mathfrak{N}_{a,E/K}$ in Theorem~\ref{thm:STinSc} satisfies
\begin{align*}
\mathfrak{N}_{a,E/K}\ll N_a\max(B_a,r).
\end{align*}

Given any positive integer $d$ and any polynomial $c \in \bF_q[T]$ with nonzero constant term, the set $\Pi_d(K_{T^r}/K,c)$ consists of monic irreducible polynomials $\fp$ of degree $d$ in $\bF_q[T]$ which are congruent to $c$ modulo $T^r$. Theorem \ref{thm:STinSc} for this case reads as follows.

\begin{theorem}\label{thm:STinScKrational}
Suppose $K = \bF_q(T),$ $E = K_{T^r}$ and $a \in K-\bF_q$. 
Let $c \in \bF_q[T]$ be a polynomial coprime to $T$ and of degree $<r.$ 
Given any subinterval $I\subseteq[0, \pi]$ and any positive integer $d\geq 1,$ we have
\begin{align*}
& \frac{1}{|\Pi_d(E/K,c)|}|\{ \fp \in \Pi_d(E/K,c):\, \fp\not\in\mathrm{supp}(a),\, \theta(\fp,a)\in I\}|\\
=&\mu_{\mathrm{ST}}(I)+O\Big(q^{\frac{r}{2}-\frac{d}{4}}N_a^{\frac{1}{2}}(B_a+r)^{\frac{1}{2}}\Big),
\end{align*}
where  the $O$-constant is absolute.
\end{theorem}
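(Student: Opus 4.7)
The plan is to deduce Theorem \ref{thm:STinScKrational} directly from Theorem \ref{thm:STinSc} applied to the Carlitz cyclotomic extension $E = K_{T^r}$. The main point is that this specialization has already been analyzed in the paragraph preceding the statement, so the proof amounts to (a) assembling the geometric/arithmetic invariants of $E/K$, (b) bounding $\mathfrak N_{a,E/K}$ in terms of $N_a$, $B_a$ and $r$, and (c) substituting into the general bound of Theorem \ref{thm:STinSc}.

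First, I would record the needed invariants of $E/K$. Since $K = \bF_q(T)$, the genus is $g = 0$. The Galois group $\mathrm{Gal}(E/K) \cong (\bF_q[T]/(T^r))^\times$ is abelian, so every conjugacy class in it is a singleton; in particular, for the given $c$, $|C| = 1$ and $[E:K] = q^r - q^{r-1}\asymp q^{r}$. The extension is ramified only at the places $0$ and $\infty$ of $K$: at $0$ it is totally (wildly) ramified with ramification index $[E:K]$, while above $\infty$ the ramification is tame of index $q-1$. Hence every irreducible (one-dimensional) character of $\mathrm{Gal}(E/K)$ is unramified outside $\{0,\infty\}$, has break $0$ at $\infty$, and by class field theory has break at most $r$ at $0$, so $B_{E/K} = r$. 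Moreover $N_{a,E/K} \leq N_a + 2$, which together with $g=0$ gives
\begin{equation*}
\mathfrak N_{a,E/K} \ll N_a\max(B_a,r).
\end{equation*}

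Next, I would verify that the set $\Pi_d(E/K,c)$ as defined in Theorem \ref{thm:STinSc} coincides with the set of monic irreducible polynomials of degree $d$ in $\bF_q[T]$ congruent to $c$ modulo $T^r$: under the Carlitz description of the Artin map, the Frobenius at a place $\fp\nmid T$ is exactly the image of $\fp$ in $(\bF_q[T]/(T^r))^\times$. Since the hypothesis $\deg c < r$ and $(c,T)=1$ ensures $c$ represents a well-defined class in $(\bF_q[T]/(T^r))^\times$, this identification is immediate.

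Finally, I substitute into Theorem \ref{thm:STinSc}. The error term there is
\begin{equation*}
q^{-d/4}\sqrt{[E:K]\,\mathfrak N_{a,E/K}} \ll q^{-d/4}\sqrt{q^{r}\cdot N_a(B_a+r)} = q^{r/2 - d/4} N_a^{1/2}(B_a+r)^{1/2},
\end{equation*}
which is exactly the stated bound. No real obstacle is anticipated here — everything reduces to the already-established framework; the only mild checkpoint is confirming that the absorption $\max(B_a,r)\leq B_a+r$ (and conversely $B_a+r\ll\max(B_a,r)$ up to a factor of $2$) is harmless inside the square root, so that the arithmetic-progression form of the theorem inherits the absolute implied constant of Theorem \ref{thm:STinSc}.
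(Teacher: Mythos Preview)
Your proposal is correct and follows exactly the paper's approach: the paragraph preceding Theorem \ref{thm:STinScKrational} already records $g=0$, $B_{E/K}=r$, $N_{a,E/K}\le N_a+2$, and $\mathfrak N_{a,E/K}\ll N_a\max(B_a,r)$, and then states the theorem as the specialization of Theorem \ref{thm:STinSc} without further proof. Your substitution $[E:K]=q^{r-1}(q-1)\le q^r$ and $\max(B_a,r)\asymp B_a+r$ to obtain the displayed error term is precisely what is intended.
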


\section{Sato--Tate distribution in short intervals}

In this section we take $K=\bF_q(T)$. Denote by $\cP_d$ the set of polynomials of degree $d$
in $\mathbf F_q[t]$, $\cM_d$ the subset of monic polynomials in $\cP_d$, and $\Pi_d$ the subset of prime 
polynomials with nonzero constant term in 
$\cM_d$. For $d\ge 2$, $\Pi_d$ is the set of  prime polynomials in $\cM_d$, but  $\Pi_1=\{ T+\mu: \mu\in \bF_q^\times\}$ 
misses the prime polynomial $T$. Fix a monic polynomial $A$ of degree $d$ and an integer $0\leq h<d$. Recall that 
$I(A, h)$ consists of those polynomials $f$ such that $\mathrm{deg}(A-f)\leq h.$ Let $\Pi_d(A,h)=I(A, h)\cap \Pi_d$. 
We begin by seeking a more amenable description of the set $\Pi_d(A,h)$.

\subsection{An involution} For $0\neq f\in \bF_q[T]$ we define
\begin{equation*}
  f^*(T):=T^{\deg f} f\Big(\frac 1T\Big).
\end{equation*}
In other words, if  $f(T) =a_0+a_1T+\dots +a_dT^d$ with $\deg f=d$ (so that
$a_d\neq 0$), then $f^*$ is the ``reversed'' polynomial
\begin{equation*}
  f^*(T) = a_0T^d+a_1T^{d-1}+\dots +a_d.
\end{equation*}
We always have $f^*(0)\not=0$. 
Note that  $f(0)\neq 0$ if and only if $\deg f^*
= \deg f$. Moreover $^*$ is an involution on polynomials which do not vanish at
$0$, equivalently, coprime to $T$:
\begin{equation*}
f^{**} = f ~ {\rm if} ~  f(0)\neq 0.
\end{equation*}
We have multiplicativity
\begin{equation*}
  (fg)^* = f^* g^*.
\end{equation*}
Moreover, if $f(0)\neq 0$, we have
\begin{equation*}\label{eq3.9}
f^*(0)^{-1}f\in\Pi_d\Longleftrightarrow f(0)^{-1}f^*\in\Pi_d.
\end{equation*}
Using the involution $^*$, one may convert short intervals to arithmetic progressions.
This key observation is contained in \cite{KR14}. Here we present a proof for the sake of completeness.

\begin{lemma}\label{congruence}
Let $0\leq h<d$ be two integers, let $f\in\cP_d$ with $f(0)\neq0$, and let $B\in\cP_{d-h-1}.$ Then we have $\deg f^*=d,$ and 
\begin{equation*}
f\in I(T^{h+1}B,h) \Longleftrightarrow f^* \equiv B^*  \bmod {T^{d-h}} .
\end{equation*}

\end{lemma}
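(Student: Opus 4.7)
The plan is to unwind both sides into explicit coefficient conditions and observe that the involution $^*$ simply relabels the indices so that the two conditions coincide. This is essentially a direct bookkeeping argument, so I do not expect any real obstacle; the only subtlety is to make sure the hypotheses $f(0)\neq 0$, $\deg B = d-h-1$, and $0\leq h<d$ are used correctly to guarantee that $\deg f^\ast = d$ and that the cutoffs on both sides line up.

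First, I would dispose of the degree statement. Writing $f=\sum_{i=0}^d a_i T^i$ with $a_d\neq 0$, the hypothesis $f(0)\neq 0$ means $a_0\neq 0$, so $f^\ast(T)=\sum_{i=0}^d a_i T^{d-i}$ has leading coefficient $a_0\neq 0$; thus $\deg f^\ast = d$. The same remark applied to $B$ (whose constant term need not be nonzero) shows that $B^\ast$ has degree at most $d-h-1$, which is what we need to make sense of the congruence modulo $T^{d-h}$.

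Next I would translate each of the two conditions to equalities between coefficients. Writing $B=\sum_{j=0}^{d-h-1}b_j T^j$, the short interval condition $f\in I(T^{h+1}B,h)$ says that $f$ and $T^{h+1}B$ agree in every coefficient of degree $>h$; extracting the coefficient of $T^i$ gives
\begin{equation*}
a_i = b_{i-h-1} \quad \text{for } h+1 \leq i \leq d.
\end{equation*}
On the other side, $f^\ast \equiv B^\ast \pmod{T^{d-h}}$ says that the coefficients of $T^k$ in $f^\ast$ and $B^\ast$ agree for $0\leq k\leq d-h-1$. By definition of $^\ast$, these coefficients are $a_{d-k}$ and $b_{d-h-1-k}$ respectively, so the congruence is
\begin{equation*}
a_{d-k} = b_{d-h-1-k} \quad \text{for } 0 \leq k \leq d-h-1.
\end{equation*}

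Finally I would match the two systems via the substitution $i=d-k$, which bijects $\{0,\dots,d-h-1\}$ with $\{h+1,\dots,d\}$ and turns $b_{d-h-1-k}$ into $b_{i-h-1}$. This is exactly the short-interval system, so the two conditions are literally the same. No further ingredients are needed; the hardest part is just keeping the index shifts straight, and using $B\in\mathcal P_{d-h-1}$ to ensure that $T^{h+1}B$ is genuinely a polynomial of degree $d$ rather than of larger or smaller degree, so that the match-up of indices up to $d$ on both sides is exhaustive.
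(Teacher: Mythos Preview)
Your proof is correct and follows essentially the same approach as the paper: both arguments amount to unwinding the definitions of $I(T^{h+1}B,h)$ and of the involution $^*$ and matching coefficients. The paper phrases it more compactly as the single equivalence ``$f=T^{h+1}B+g$ with $\deg g\leq h$ if and only if $f^*=B^*+T^{d-h}g^*$'', which is exactly your index substitution $i=d-k$ packaged without writing out the coefficients.
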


\proof We have $f= T^{h+1}B+g$ with $\mathrm{deg}\,g \leq h$ if and only if $f^* = B^*
+T^{d-h}g^* $. The lemma follows immediately.
\endproof

Next we estimate the cardinality of $\Pi_d(A,h)$.

\begin{lemma}\label{lm:PNT-short} 
We have $|\Pi_d(A,0)| = q-1$ if $d=1$, and $$
| \Pi_d(A,h)| =\frac{q^{h+1}}d +O(q^{d/2+1})$$ if $d\ge 2.$
\end{lemma}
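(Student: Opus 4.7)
The case $d=1$ is immediate: here $h=0$, so $I(A,0)=\{T+c:c\in\bF_q\}$ consists of $q$ monic linear (hence automatically irreducible) polynomials, and exactly $q-1$ of them have nonzero constant term, giving $|\Pi(A,0)|=q-1$.

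For $d\ge 2$ the plan is to use the involution $f\mapsto f^*$ of Lemma \ref{congruence} to convert the short-interval condition into a congruence modulo $T^{d-h}$, and then sum the effective prime number theorem for arithmetic progressions in $\bF_q[T]$ (as recalled just before Theorem \ref{thm:STinScKrational}) over the $q-1$ residue classes that appear. Concretely, I would first write $A=T^{h+1}B+r$, which is the unique decomposition with $B$ monic of degree $d-h-1$ and $\deg r\le h$; then $I(A,h)=I(T^{h+1}B,h)$, and Lemma \ref{congruence} turns the membership $f\in I(A,h)$ (for $f$ with $f(0)\ne 0$) into the congruence $f^*\equiv B^*\pmod{T^{d-h}}$.

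The next step is the bookkeeping around the involution. For $f\in\Pi(A,h)$, monic of degree $d$ with $f(0)\ne 0$, the reverse $f^*$ has constant term $1$ and leading coefficient $c:=f(0)\in\bF_q^\times$. Rescaling to the monic irreducible $\widetilde g:=c^{-1}f^*$ of degree $d$, the congruence becomes $\widetilde g\equiv c^{-1}B^*\pmod{T^{d-h}}$; since $B^*(0)=1$, this forces $\widetilde g(0)=c^{-1}$ automatically. Conversely, for each $e\in\bF_q^\times$, any monic irreducible $\widetilde g$ of degree $d$ with $\widetilde g\equiv eB^*\pmod{T^{d-h}}$ satisfies $\widetilde g(0)=e$ and recovers, via $f=\widetilde g(0)^{-1}\widetilde g^*$, a unique element of $\Pi(A,h)$. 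This produces a disjoint decomposition
\begin{equation*}
|\Pi(A,h)|=\sum_{e\in\bF_q^\times}\Big|\Pi_d(K_{T^{d-h}}/K,\,eB^*)\Big|.
\end{equation*}

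Finally, I would apply the effective estimate for $|\Pi_d(K_g/K,\,\cdot\,)|$ from the paragraph preceding Theorem \ref{thm:STinScKrational}, with $g=T^{d-h}$, so that $[K_g:K]=\phi(T^{d-h})=q^{d-h-1}(q-1)$ and $\deg g=d-h\le d$. Each summand is thereby equal to $q^d/\bigl(d\,q^{d-h-1}(q-1)\bigr)+O(q^{d/2})$, and summing over the $q-1$ classes $eB^*$ collapses the main term to $q^{h+1}/d$ while the errors combine into $O(q^{d/2+1})$. No serious obstacle is expected: the content is a careful translation via the $*$-involution followed by the cited PNT input; the only delicate point is step three, where one must check that the leading-coefficient parameter $c=f(0)$ of the original polynomial becomes precisely the residue-class parameter $e=c^{-1}$ in the Carlitz quotient $(\bF_q[T]/T^{d-h})^\times$, and that the constant-term normalization $B^*(0)=1$ makes this matching consistent.
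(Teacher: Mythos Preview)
Your proposal is correct and follows essentially the same route as the paper's own proof: both partition $\Pi(A,h)$ according to the constant term $f(0)$, apply the involution $f\mapsto f(0)^{-1}f^*$ together with Lemma~\ref{congruence} to convert the short-interval condition into the congruences $g\equiv \mu^{-1}B^*\pmod{T^{d-h}}$ for $\mu\in\bF_q^\times$, and then sum the effective prime-counting estimate of \cite[(3)]{BBSR15} (equivalently, the displayed bound preceding Theorem~\ref{thm:STinScKrational}) over the $q-1$ residue classes. Your parameter $e$ is precisely the paper's $\mu^{-1}$, and the bookkeeping you flag as delicate is exactly the verification the paper carries out.
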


\proof  
The case of $d=1$ and $h=0$ is clear since $$\Pi_d(A,0)=\{T+\mu: \mu\in \bF_q^\times\}.$$  
For $d\ge 2$, let $[\Pi_d]$ be the set of irreducible (but not necessarily monic) polynomials in $\cP_d$ and 
for $\lambda,\mu\in \bF_q^\times$, let
$$
[\Pi_d]_{\lambda,\mu}:=\{f\in [\Pi_d]: f^*(0)=\lambda,f(0)=\mu\}.
$$ 
Note that $\lambda$ and $\mu$ correspond to the leading coefficient and the constant term of $f$ respectively. We have
$\Pi_d=\bigsqcup_{\mu\in \bF_q^\times} [\Pi_d]_{1,\mu}$. 

Let $B\in \cM_{d-h-1}$ be the unique monic polynomial determined by the congruence 
$$A^* \equiv B^* \bmod {T^{d-h}}.$$
\noindent Then 
$$I(A,h)=I(T^{h+1}B,h).$$
For any $\mu\in \bF_q^\times$, 
by Lemma~\ref{congruence}, 
we have a bijection 
$$f\mapsto f(0)^{-1}f^*$$ from the set $I(A,h) \cap [\Pi_d]_{1,\mu}$ onto the set
$$
\{g\in [\Pi_d]_{1,\mu^{-1}}: g \equiv \mu^{-1} B^* \bmod {T^{d-h}}\}.
$$
Since $B^*(0)=1$, the last set is the same as  
$$
\{g\in \Pi_d: g \equiv \mu^{-1} B^* \bmod {T^{d-h}}\},
$$ 
whose cardinality, denoted by $\pi_q(d,T^{d-h}, \mu^{-1}B^*)$, satisfies 
$$
\pi_q(d; T^{d-h}, c^{-1}B^*) = \frac{|\Pi_d|}{\Phi(T^{d-h})} +O\Big(\frac{q^{d/2}}d(d-h)\Big) 
$$ by \cite[(3)]{BBSR15},
where $$\Phi(T^{d-h})=| (\bF_q[T]/(T^{d-h}))^\times|= q^{d-h}(1-q^{-1}),$$
from which and Lemma \ref{lm:PNT}, we infer  
$$
|I(A,h) \cap [\Pi_d]_{1,\mu}| =  \frac{q^d}{d \Phi(T^{d-h})} +O(q^{d/2})=\frac{q^{h+1}}{d (q-1)} +O(q^{d/2}).
$$
It follows that
$$
|\Pi_d(A,h)|=\sum_{\mu\in \bF_q^\times} | I(A,h)\cap [\Pi_d]_{1,\mu}|=\frac{q^{h+1}}d +O(q^{d/2+1}).
$$
\endproof

Hence we have, for $0\le h<d$,
\begin{align}\label{pi(A,h)}
|\Pi_d(A,h)|= \frac{q^{h+1}}{d} +  O(q^{\delta(d)}) 
\end{align}
where $\delta(d)= 0$ if $ d=1$, or $\frac{d}2+1$ if $d\ge 2$. 

\begin{remark}\label{keyremark} By
\cite[Corollary 2.6]{BBSR15},
 $$
\pi_q(d;T^{d-h}, B^*) \sim \frac{|\Pi_d|}{\Phi(T^{d-h})} \sim \frac{q^h}{d(1-q^{-1})} \quad \mbox{ as $q\to \infty$}
 $$
holds uniformly for  $3\le h<d$ and all $B^*$ 
relatively prime to $T^{d-h}$. 
When $d\ge 6$, we have $\frac{d}2-1\ge 2$. So if $\frac{d}2-1< h<d$ and $d\geq 6$, then  
$$
|\Pi_d(A,h)|=\sum_{\mu\in \bF_q^\times} \pi_q(d; T^{d-h}, \mu^{-1}B^*) \sim \frac{q^{h+1}}d
$$
and hence $|\Pi_d(A,h)|\gg \frac{q^{h+1}}{d}$. For the remaining cases $d\leq 5$, \eqref{pi(A,h)} gives the same lower bound 
$|\Pi_d(A,h)|\gg \frac{q^{h+1}}{d}$ provided 
$d=1$ or $h>d/2$. Thus,  for $\frac{d}2-1< h<d$,  we can infer 
\begin{align}\label{pilb}
|\Pi_d(A,h)|\gg \frac{q^{h+1}}{d} 
\end{align}
except $(d,h)=(2,1),(3,1),(4,2),(5,2)$ (arising from the cases $\frac{d}2-1<h\le \frac{d}2$ and $2\le d\le 5$).
\end{remark}

Let $B\in \cM_{d-h-1}$ be the unique polynomial such that $A^* \equiv B^* \bmod {T^{d-h}}$.
Consider the abelian extension $E= K_{T^{d-h}}$ of $K= \bF_q(T)$. Take $c\in \mathrm{Gal}(E/K)$ to be the element corresponding to $B^*\in (\bF_q[T]/(T^{d-h}))^\times$ under the isomorphism 
$\mathrm{Gal}(E/K) \cong (\bF_q[T]/(T^{d-h}))^\times$. For any $\mu\in \bF_q^\times$, the set $\Pi_d(E/K,{\mu^{-1}B^*})$ 
can be identified with the set of those $f\in \Pi_d$ such that $f\equiv \mu^{-1}B^*\pmod {T^{d-h}}$. The proof of Lemma~\ref{lm:PNT-short} shows that 
$$
\Pi_d(A,h) = \Big\{\fp\in \Pi_d: \fp(0)^{-1}\fp^* \in \bigcup_{\mu\in \bF_q^\times} \Pi_d (E/K, {\mu^{-1} B^*})\Big\}.
$$
This strongly suggests that Theorem~\ref{thm:STinScKrational} could be used to obtain the distribution of Kloosterman sums in short intervals if one could relate them at places represented by $\fp$ and $\fp^*$. This is indeed our strategy of proof carried out in the next subsection. 

Recall that $a \in\bF_q(T)-\bF_q$. For each irreducible monic polynomial 
$\fp \in \bF_q[T]$ with $\fp\not\in\mathrm{supp}(a)$, the Kloosterman sum at $\fp$ is given by
\begin{align*}
\kl(\fp,a)=\sum_{x\in \bF_{\fp}^\times}\psi(\Tr_{\bF_\fp/\bF_q}(x+\bar a/x)),
\end{align*}
where $\bF_\fp = \bF_q[T]/(\fp(T))$ is the residue field at $\fp$ and $\bar a$ is the image of $a$ in $\bF_\fp$. 
Our key observation is the following.

\begin{lemma}\label{lm:Klidentity}
Let $a(T)\in\bF_q(T)-\bF_q$ and let $\tilde a(T) = a(\frac 1T)$.
For any $\fp \in \bF_q[T]$ with $\fp\not\in\mathrm{supp}(a)$ and $\fp(0) \neq 0$, we have
$$\kl(\fp,a) = \kl(\fp^*,\tilde a).$$
\end{lemma}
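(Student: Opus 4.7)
The plan is to exhibit an explicit isomorphism of residue fields $\phi : \bF_\fp \xrightarrow{\cong} \bF_{\fp^*}$ that matches $\bar a$ with $\bar{\tilde a}$, so the two Kloosterman sums coincide term by term after reindexing.

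First I would check that $\fp^*$ is, up to a unit in $\bF_q^\times$, a monic irreducible polynomial of the same degree as $\fp$. The condition $\fp(0)\neq 0$ gives $\deg \fp^* = \deg \fp$ and $\fp^*(0)=1$; irreducibility follows from the multiplicativity $(fg)^*=f^*g^*$ on polynomials with nonzero constant term together with $\fp^{**}=\fp$, since any nontrivial factorization of $\fp^*$ would transport back to one of $\fp$. Consequently $\bF_{\fp^*}:=\bF_q[T]/(\fp^*)$ is a field of degree $\deg\fp$ over $\bF_q$.

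Next, let $\alpha$ and $\beta$ denote the respective images of $T$ in $\bF_\fp$ and $\bF_{\fp^*}$. The relation $\fp^*(\beta)=0$ reads $\beta^{\deg\fp}\fp(1/\beta)=0$, so $1/\beta$ is a root of $\fp$ in $\bF_{\fp^*}$. By the universal property of $\bF_\fp$, there is a unique $\bF_q$-algebra homomorphism $\phi:\bF_\fp\to\bF_{\fp^*}$ with $\phi(\alpha)=1/\beta$, and it is an isomorphism for dimension reasons. Since $\fp\not\in\mathrm{supp}(a)$ and $\fp(0)\neq 0$, the rational function $a$ is regular at $\alpha$, and symmetrically $\fp^*\not\in\mathrm{supp}(\tilde a)$ (the involution $t\leftrightarrow 1/t$ swaps the supports of $a$ and $\tilde a$ away from the places $0$ and $\infty$), so $\tilde a$ is regular at $\beta$. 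Evaluating,
\[
\phi(\bar a)=\phi(a(\alpha))=a(\phi(\alpha))=a(1/\beta)=\tilde a(\beta)=\bar{\tilde a}.
\]

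Finally, because $\phi$ is an $\bF_q$-algebra isomorphism, it commutes with multiplicative inversion and with the trace down to $\bF_q$. Hence for every $x\in\bF_\fp^\times$,
\[
\Tr_{\bF_\fp/\bF_q}(x+\bar a/x)=\Tr_{\bF_{\fp^*}/\bF_q}\!\bigl(\phi(x)+\bar{\tilde a}/\phi(x)\bigr),
\]
and as $x$ runs over $\bF_\fp^\times$, $\phi(x)$ runs over $\bF_{\fp^*}^\times$. Applying $\psi$ and summing yields $\kl(\fp,a)=\kl(\fp^*,\tilde a)$.

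No step is genuinely difficult here; the only points requiring care are tracking the unit factor so that $(\fp^*)$ is the correct ideal defining the residue field, and verifying that $\tilde a$ is regular at $\fp^*$, which is automatic once one observes that $^*$ is an involution on places of $K$ distinct from $0$ and $\infty$.
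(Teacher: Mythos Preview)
Your proof is correct and follows essentially the same route as the paper: the paper fixes a root $\alpha$ of $\fp$ in $\overline{\bF_q}$, notes that $1/\alpha$ is a root of $\fp^*$, and identifies both residue fields with $\bF_q[\alpha]=\bF_q[1/\alpha]$ so that $\tilde a(1/\alpha)=a(\alpha)$ makes the two sums literally equal. Your version spells out the same identification more formally via the universal property of the quotient ring, but the underlying idea is identical.
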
 

\begin{proof} 
Let $\alpha$ be a root of $\mathfrak p$ in $\overline{\mathbf F}_q$. Then $1/\alpha$ is a root of $\fp^*$. We may identify  both $\bF_{\fp}$ and $\bF_{\fp^*}$ with $\mathbf F_q[\alpha]=\mathbf F_q[1/\alpha]$. We have $\tilde a(1/\alpha)=a(\alpha)$. So 
$$\kl(\fp, a) =\sum_{x\in \mathbf F_q[\alpha]^\times}\psi\Big(\Tr_{\mathbf F_q[\alpha]/\bF_q}\Big(x+\frac{a(\alpha)}{x}\Big)\Big) = \sum_{x\in \mathbf F_q[\alpha]^\times}\psi\Big(\Tr_{\mathbf F_q[\alpha]/\bF_q}\Big(x+\frac{\tilde a(1/\alpha)}{x}\Big)\Big) = \kl(\fp^*,\tilde a).$$
\end{proof}

\begin{remark}
Lemma \ref{lm:Klidentity} admits the following geometric interpretation.
For the automorphism 
$$f: \mathbb G_m\to \mathbb G_m, \quad z\mapsto 1/z,$$ 
we have $f(\fp)=\fp^*$ and $f^*a^*\mathrm{Kl}\cong \tilde a^*\mathrm{Kl}$. So we have 
\begin{eqnarray*}
\mathrm{Kl}({\fp}, a)=\mathrm{Tr}(\mathrm{Frob}_{\fp}, (a^*\mathrm{Kl})_{\bar \fp})=
\mathrm{Tr}(\mathrm{Frob}_{\fp^*}, (\tilde a^*\mathrm{Kl})_{\bar \fp^*})=\mathrm{Kl}({\fp^*}, \tilde a).
\end{eqnarray*}
This indicates that Lemma \ref{lm:Klidentity} applies to any constructible sheaf on $\mathbb P^1$ and is not specific to Kloosterman sums, as pointed out by the referee.
\end{remark}

\subsection{Proof of Theorem \ref{thm:shortinterval}} 

As mentioned before,  if $\fp(0) \ne 0$, then $\fp^*$ is a prime polynomial with the same degree as 
$\fp$ and $\fp^*(0) \ne 0$. So $\fp$ and $\fp^*$ have the same norm. We have equal normalized Kloosterman sums
$$\kl(\fp, a)/\sqrt{N\fp} = \kl(\fp^*, \tilde a)/\sqrt{N\fp^*}= \kl(\fp(0)^{-1}\fp^*, \tilde a)/\sqrt{N(\fp(0)^{-1}\fp^*)}$$
since $\fp^*$ and $\fp(0)^{-1}\fp^*$ are uniformizers of the same place of $K$. 
As noted before, $\fp\in \Pi_d(A,h)$ if and only if $\fp(0)^{-1} \fp^*\in \Pi_d(E/K,{\fp(0)^{-1}B^*})$. 
Furthermore, outside the places $0$ and $\infty$ of $K$, $\fp\not\in\mathrm{supp}(a)$ happens if and only if $\fp(0)^{-1}\fp^*\not\in\mathrm{supp}(\tilde{a})$. 
So we have
\begin{eqnarray*}
&&\Big|\{ \fp \in \Pi_d(A,h):\,\fp\not\in\mathrm{supp}(a),\,\theta_\fp(a)\in I\}
\Big|\\
&=&\Big|\bigcup_{\mu\in\mathbf F_q^\times}\{ \fp \in  \Pi_d(E/K, {\mu B^*} ):\,
\fp \not\in\mathrm{supp}(\tilde{a}), \,\theta_\fp(\tilde a) \in I\} \Big|.
\end{eqnarray*}
Note that
\begin{align*}
&\sum_{\mu\in \bF_q^\times} \sum_{\substack{\fp \in \Pi_d(E/K,{\mu B^*})\\ \fp\not\in\mathrm{supp}
(\tilde a)}}\Sym^k(\theta_\fp({\tilde a}))\\
=\;&\frac{1}{[E:K]}\sum_{\chi \in \widehat{G}} \sum_{\mu \in \bF_q^\times} \overline\chi(\mu B^*)
\sum_{\substack{\fp \in \Pi_d(K)\\ \fp\not\in\mathrm{supp}(\tilde a)\cup\mathrm{supp}\,\mathfrak D_{E/K}}}
\Sym^k( \theta_\fp({\tilde a})) \chi(\mathrm{Frob}_{\fp}),
\end{align*}
where $\widehat{G}$ is the set of all complex characters of $\mathrm{Gal}(E/K)\cong (\bF_q[T]/(T^{d-h}))^\times$. The sum  
$\sum_{\mu \in \bF_q^\times} \overline\chi(\mu B^*)$ vanishes unless $\chi|_{\bF_q^\times} =1$ is trivial, in which case the sum equals $(q-1)\overline{\chi}(B^*)$. Note that the number of $\chi\in   \widehat{G}$ 
satisfying $\chi|_{\bF_q^\times} =1$ is $[E:K]/(q-1)$. Applying Corollary~\ref{twistedsymk}, we find
\begin{align*}
\Big|\sum_{\mu\in \bF_q^\times} \sum_{\substack{\fp \in \Pi_d(E/K,{\mu B^*})\\ \fp\not\in\mathrm{supp}
(\tilde a)}}\Sym^k(\theta_\fp({\tilde a}))\Big|&\leq 6(k+1)\Big((N_{\tilde a}+2) \max(B_{\tilde a}, d-h)+1\Big)\frac{q^{\frac{d}{2}}}{d }\\
&\ll kN_{\tilde a}\max(B_{\tilde a}, d-h) 
\frac{q^{\frac{d}{2}}}{d }.
\end{align*}
Applying Lemma \ref{lm:Niederreiter} to the case  
\begin{eqnarray*}
\Lambda = \Big\{\fp \in \bigcup_{\mu \in \bF_q^\times}\Pi_d(E/K, {\mu B^*} ):\fp\not\in\mathrm{supp}(a)\Big\},\ \ \{\theta_\lambda\}_{\lambda\in \Lambda}=\{\theta_\fp(\tilde a)\}_{\fp \in \Lambda},
\end{eqnarray*}
we obtain 
\begin{eqnarray*}
&&\frac{1}{|\Lambda|}\Big|\bigcup_{\mu\in\mathbf F_q^\times}\{ \fp \in  \Pi_d(E/K, {\mu B^*} ):\,
\fp \not\in\mathrm{supp}(\tilde{a}), \,\theta_\fp(\tilde a) \in I\}\Big |
-\mu_{\mathrm{ST}}(I)\\
&\ll& \frac{1}{|\Lambda|}\sum_{k=1}^M \frac{1}{k}\Big|\sum_{\mu\in \bF_q^\times} \sum_{\substack{\fp \in \Pi_d(E/K,{\mu B^*})\\ \fp\not\in\mathrm{supp}
(\tilde a)}}\Sym^k(\theta_\fp({\tilde a}))\Big|+\frac{1}{M}\\
&\ll& \frac{N_{\tilde a}\max(B_{\tilde a}, d-h)}{|\Lambda|} 
\frac{q^{\frac{d}{2}}}{d }M+\frac{1}{M}.
\end{eqnarray*}
By Remark \ref{keyremark},  we have
$$|\Pi_d(A, h)|=\Big|\bigcup_{\mu \in \bF_q^\times}\Pi_d(E/K, {\mu B^*} )\Big|\gg \frac{q^{h+1}}{d}.$$ So $|\Lambda|\gg \frac{q^{h+1}}{d}$. We thus have
\begin{eqnarray*}
&&\frac{1}{|\Lambda|}\Big|\bigcup_{\mu\in\mathbf F_q^\times}\{ \fp \in  \Pi_d(E/K, {\mu B^*} ):\,
\fp \not\in\mathrm{supp}(\tilde{a}), \,\theta_\fp(\tilde a) \in I\}\Big |
-\mu_{\mathrm{ST}}(I)\\
&\ll& \frac{N_{\tilde a}\max(B_{\tilde a}, d-h)}{|\Lambda|} 
\frac{q^{\frac{d}{2}}}{d }M+\frac{1}{M}\ll 
 {N_{\tilde a}\max(B_{\tilde a}, d-h)} 
q^{\frac{d}{2}-h-1}M+\frac{1}{M}.
\end{eqnarray*}
Taking $M$ to be the integer closest to $({N_{\tilde a}\max(B_{\tilde a}, d-h)} 
q^{\frac{d}{2}-h-1})^{-1/2}$, we get 
$$\frac{1}{|\Lambda|}\Big|\bigcup_{\mu\in\mathbf F_q^\times}\{ \fp \in  \Pi_d(E/K, {\mu B^*} ):\,
\fp \not\in\mathrm{supp}(\tilde{a}), \,\theta_\fp(\tilde a) \in I\}\Big |
-\mu_{\mathrm{ST}}(I)
\ll q^{\frac{d-2h-2}{4}}\sqrt{N_{\tilde a}\max(B_{\tilde a}, d-h)}.$$
Combining with the fact that $N_{\tilde a}= N_a$, we get Theorem  \ref{thm:shortinterval}.

\section{Joint Sato--Tate distribution}

The following lemma facilitates the  study of joint Sato--Tate distributions.

\begin{lemma}\label{lm:NRT-twodim}
Let $n\geqslant1$ be an integer and let $I_i, (1\leqslant i\leqslant n)$ be $n$ given subintervals of $[0,\pi]$. 
Put $I=\prod_{1\leqslant i\leqslant n}I_i$ and $\chi_I(\btheta)=\prod_{1\leqslant i\leqslant n}\chi_{I_i}(\theta_i)$ for $\btheta=(\theta_1,\cdots,\theta_n)\in [0,\pi]^n.$ Then 
for each positive integer $L$, there exist a real valued function $\Delta^\pm(\btheta)$ and
real numbers $a^\pm(\bk)$ for all $\bk=(k_1,\cdots,k_n)\in[0,L]^n,$
such that $$a^\pm(\bk) \ll \prod_{1\leqslant i\leqslant n}\frac{1}{(k_i+1)},\quad \Delta^\pm(\btheta)\ll \frac{1}{L}$$ and
\begin{align}\label{eq:NRT-twodim}
\chi^-(\btheta)\leqslant \chi_I(\btheta)\leqslant \chi^+(\btheta)
\end{align}
with
\begin{align*}
\chi^\pm(\btheta)=\prod_{1\leqslant i\leqslant n}\mu_{\mathrm{ST}}(I_i)+\sum_{\mathbf{0}\neq \bk\in[0,L]^n}a^\pm(\bk)\Sym^{\bk}(\btheta)+\Delta^\pm(\btheta),
\end{align*}
where $\Sym^{\bk}(\btheta)=\prod_{1\leqslant i\leqslant n}\Sym^{k_i}(\theta_i)$ and $\|\chi^\pm\|_\infty\ll 1.$
\end{lemma}

\proof The proof is an iterative application of Lemma \ref{lm:NRT}, and we prove by induction. Note that the case $n=1$ is exactly  Lemma \ref{lm:NRT}.

Suppose that $I$ and $\chi_I$ are of dimension $n$ given as in this lemma. We introduce another interval $J=I_{n+1}\subseteq[0,\pi]$, whose characteristic function $\eta_J$ satisfies 
\begin{align*}
\eta_J^-(\theta)\leqslant \eta_J(\theta)\leqslant \eta_J^+(\theta),
\end{align*}
where $\eta_J^\pm$ are given by
\begin{align*}
\eta_J^\pm(\theta)=\mu_{\mathrm{ST}}(J)+\sum_{1\leqslant k\leqslant L ~}b_k^\pm\Sym^k(\theta)+\delta^\pm(\theta)
\end{align*}
with $\|\eta_J^\pm\|_\infty\ll1$ and $\delta^\pm(\theta)\ll 1/L.$
We now approximate $\chi_I(\btheta)\eta_J(\theta)$ from above and below, to establish the lemma for dimension $n+1$.

Note that $\chi_I\eta_J\leqslant \chi_I^+\eta_J^+.$ We have
\begin{align*}
&\ \ \ \ \chi_I^+(\btheta)\eta_J^+(\theta)\\
&=\eta_J^+(\theta)\Big(\prod_{1\leqslant i\leqslant n}\mu_{\mathrm{ST}}(I_i)+\sum_{\mathbf{0}\neq \bk\in[0,L]^n}a^+(\bk)\Sym^{\bk}(\btheta)\Big)+\eta_J^+(\theta)\Delta^+(\btheta)\\
&=\Big(\mu_{\mathrm{ST}}(J)+\sum_{1\leqslant k\leqslant L ~}b_k^+\Sym^k(\theta)+\delta^+(\theta)\Big)\Big(\prod_{1\leqslant i\leqslant n}\mu_{\mathrm{ST}}(I_i)+\sum_{\mathbf{0}\neq \bk\in[0,L]^n}a^+(\bk)\Sym^{\bk}(\btheta)\Big)\\
&\ \ \ \ +\eta_J^+(\theta)\Delta^+(\btheta)\\
&=\Big(\mu_{\mathrm{ST}}(J)+\sum_{1\leqslant k\leqslant L ~}b_k^+\Sym^k(\theta)\Big)\Big(\prod_{1\leqslant i\leqslant n}\mu_{\mathrm{ST}}(I_i)+\sum_{\mathbf{0}\neq \bk\in[0,L]^n}a^+(\bk)\Sym^{\bk}(\btheta)\Big)+\Delta^+(\btheta,\theta),
\end{align*}
where
\begin{align*}
\Delta^+(\btheta,\theta)
&=\delta^+(\theta)(\chi_I^+(\btheta)-\Delta^+(\btheta))+\eta_J^+(\theta)\Delta^+(\btheta)\ll 1/L.
\end{align*}
Expanding the product, we prove the upper bound part of this lemma for dimension $n+1$.

It remains to deal with the lower bound. To do so, we first observe that $(\chi_I-\chi_I^-)(\eta_J-\eta_J^-)\geqslant0,$ which yields
\begin{align*}
\chi_I\eta_J
&\geqslant \chi_I\eta_J^-+\chi_I^-\eta_J-\chi_I^-\eta_J^-.
\end{align*}
Note that 
\begin{align*}
\chi_I\eta_J^-=\chi_I(\eta_J^- -\eta_J^+)+\chi_I\eta_J^+
\geqslant \chi_I^+(\eta_J^- -\eta_J^+)+\chi_I^-\eta_J^+,
\end{align*}
and similarly,
\begin{align*}
\chi_I^-\eta_J\geqslant \eta_J^+(\chi_I^- -\chi_I^+)+\chi_I^+\eta_J^-.
\end{align*}
Hence
\begin{align*}
\chi_I\eta_J
&\geqslant 2(\chi_I^+\eta_J^-+\chi_I^-\eta_J^+-\chi_I^+\eta_J^+)-\chi_I^-\eta_J^-.
\end{align*}
Following similar arguments to express these $\chi_I^\pm\eta_J^\pm$ in terms of symmetric powers, we complete the proof of the lemma.
\endproof

Lemmata \ref{lm:NRT} and \ref{lm:NRT-twodim} allow us to derive the following $n$-dimensional analogue of 
Niederreiter's inequality (Lemma \ref{lm:Niederreiter}),
which can be regarded as an Erd\H{o}s--Tur\'an inequality for $\mathrm{SU}(2)^{\times n}.$

\begin{lemma}\label{lm:2dNiederreiter}
Given any integer $n\geqslant1$, let $I_1,\cdots, I_n$ be $n$ subintervals of $[0,\pi]$ and $\btheta_\lambda=(\theta_{1,\lambda},\cdots,\theta_{n,\lambda})\in [0,\pi]^n$ for $\lambda\in \Lambda.$ For any positive integer $L\geq 1,$ we have 
\begin{align*}
&\ \ \ \ \frac{|\{\lambda\in \Lambda: \btheta_\lambda\in \prod_{i=1}^n I_i\}|}{|\Lambda|}-\prod_{i=1}^n\mu_{\mathrm{ST}}(I_i)\\
&\ll \frac{1}{|\Lambda|}\mathop{\sum}_{\substack{\mathbf{k}=(k_1,\cdots, k_n)\\ \mathbf{0}\neq \bk\in[0,L]^n}}\frac{1}{\prod_{i=1}^n(k_i+1)}\Big|\sum_{\lambda\in \Lambda} \mathrm{Sym}^{\bk}(\btheta_{\lambda})
\Big|+\frac{1}{L}
\end{align*}
where $\Sym^{\bk}(\btheta)=\prod_{1\leqslant i\leqslant n}\Sym^{k_i}(\theta_i)$ for $\btheta= (\theta_1,\cdots, \theta_n)$.
\end{lemma}

\subsection{Joint Sato--Tate distribution}
For $i=1,\ldots, n$, let $\rho_i:\mathrm{Gal}(\overline K/K)\to\mathrm{GL}(V_i)$ be
$\overline{\mathbf Q}_\ell$-representations of degree $2$ unramified on open subsets $U_i$ of $X$. We assume
$\rho_i$ are punctually $\iota$-pure of weight $0$, and both the arithmetic and the geometric monodromy groups of $\rho_i$ are $\mathrm{SL}(2)$, that is, 
the Zariski closures of images of both $\mathrm{Gal}(\overline K/K)$ and 
$\mathrm{Gal}(\overline K/K\overline{\mathbf F}_q)$ under $\rho_i$ are $\mathrm{SL}(V_i)$. Then for any place $\mathfrak p\in U_i$, 
the eigenvalues of $\iota\rho_i(\mathrm{Frob}_{\mathfrak p})$ are $e^{\theta{i,\mathfrak p}}$ and $e^{-\theta{i,\mathfrak p}}$
for uniquely determined angles $\theta_{i, \mathfrak p}\in [0, \pi]$. Applying Proposition \ref{prop:Frobeinus-keyestimate} 
to the representation $\mathrm{Sym}^{k_1}(\rho_1)\otimes \cdots\otimes\mathrm{Sym}^{k_n}(\rho_n)$, we get the following estimate.

\begin{proposition}\label{prop:estimate_joint}
Let $k_1,\ldots,k_n\geqslant0$ be integers which are not simultaneously zero.
Under the above assumptions, suppose furthermore that 
the representation $\mathrm{Sym}^{k_1}(\rho_1)\otimes \cdots\otimes\mathrm{Sym}^{k_n}(\rho_n)$ has 
neither geometric invariant nor geometric coinvariant.
Let $B$ be the largest break
of the representations $\rho_i|_{I_{\mathfrak p}}$ for all $\mathfrak p\in X-U_1\cap\cdots\cap U_n$ and $i=1,\ldots,n,$ let
$$N=\sum_{\mathfrak p\in X-U_1\cap\cdots\cap U_n} \mathrm{deg} ~\mathfrak p,$$ and let $\mathfrak N=g+1+NB+N$. Then
\begin{align*}
\Bigg|\sum_{\mathfrak p\in U_1\cap\cdots\cap U_n,\,\mathrm{deg}(\mathfrak p) = m} \mathrm{Sym}^{k_1}(\theta_{1, \mathfrak p})
\cdots\mathrm{Sym}^{k_n}(\theta_{n, \mathfrak p}) \Bigg|
&\leq
\frac{q^{m/2}}m (6g+1+N(B+3))(k_1+1)\cdots(k_n+1)\\
&\ll \frac{q^{m/2}}m \mathfrak N(k_1+1)\cdots(k_n+1).
\end{align*}
\end{proposition}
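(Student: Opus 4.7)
The plan is to apply Proposition~\ref{prop:Frobeinus-keyestimate}(ii) to the Galois representation $\rho := \mathrm{Sym}^{k_1}(\rho_1)\otimes\mathrm{Sym}^{k_2}(\rho_2)$, which is unramified precisely on $U := U_1\cap U_2$. For any $\mathfrak p\in U$, the eigenvalues of $\iota\rho_i(\mathrm{Frob}_{\mathfrak p})$ are $e^{\pm i\theta_{i,\mathfrak p}}$, so a direct computation of traces of symmetric powers gives
\[\iota\mathrm{Tr}(\rho(\mathrm{Frob}_{\mathfrak p})) = \mathrm{Sym}^{k_1}(\theta_{1,\mathfrak p})\,\mathrm{Sym}^{k_2}(\theta_{2,\mathfrak p}),\]
and $\dim\rho = (k_1+1)(k_2+1)$.

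First I would verify the hypotheses of Proposition~\ref{prop:Frobeinus-keyestimate}(ii). Punctual $\iota$-purity of weight $0$ for $\rho$ follows from the corresponding property of $\rho_1$ and $\rho_2$, since symmetric powers and tensor products of pure representations of weight $0$ remain pure of weight $0$; this is parallel to the opening of the proof of Lemma~\ref{lm:symtensorsigma}. The absence of geometric invariants and coinvariants for $\rho$ is part of the hypothesis, so there is nothing further to check there.

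The main technical step is to bound the largest break of $\rho|_{I_{\mathfrak p}}$ at each $\mathfrak p\in X-U$ by $B$. I would invoke the standard fact from Katz's theory of break decompositions that, for representations $V,W$ of $I_{\mathfrak p}$ with all breaks $\le B$, every break appearing in $V\otimes W$ is also $\le B$: on each block $V(\alpha)\otimes W(\beta)$ the break equals $\max(\alpha,\beta)$ when $\alpha\ne\beta$, and is $\le\alpha$ when $\alpha=\beta$. Since in characteristic zero $\mathrm{Sym}^{k_i}(\rho_i)$ is a direct summand of $\rho_i^{\otimes k_i}$, its breaks at $I_{\mathfrak p}$ are bounded by those of $\rho_i|_{I_{\mathfrak p}}$, and hence by $B$. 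Combining these observations shows that every break of $\rho|_{I_{\mathfrak p}}$ is $\le B$ as well.

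Inserting $\dim\rho = (k_1+1)(k_2+1)$ and this break bound into Proposition~\ref{prop:Frobeinus-keyestimate}(ii) yields exactly the first claimed estimate. The second estimate, involving $\mathfrak N = g+1+NB+N$, follows from the elementary inequality $6g+1+N(B+3) = 6g+1+NB+3N \le 6(g+1+NB+N) = 6\mathfrak N$. The only non-routine ingredient is the break-theoretic bookkeeping in the previous paragraph; once that is in place, the proof reduces to a direct citation of Proposition~\ref{prop:Frobeinus-keyestimate}(ii).
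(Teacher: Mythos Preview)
Your proposal is correct and follows exactly the paper's approach: the paper simply says ``Applying Proposition~\ref{prop:Frobeinus-keyestimate} to the representation $\mathrm{Sym}^{k_1}(\rho_1)\otimes \mathrm{Sym}^{k_2}(\rho_2)$'' and records the resulting bound. You actually supply more detail than the paper does, in particular the verification that the largest break of $\rho=\mathrm{Sym}^{k_1}(\rho_1)\otimes\mathrm{Sym}^{k_2}(\rho_2)$ at each bad place is bounded by the same $B$ governing $\rho_1,\rho_2$; the paper leaves this implicit, and your break-decomposition argument is the standard way to justify it.
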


We are now ready to prove a joint Sato--Tate distribution for a finite family of Galois representations with an explicit error term.

\begin{theorem}\label{thm:jointdist}Let $\rho_i:\mathrm{Gal}(\overline K/K)\to\mathrm{GL}(V_i)$ $(i=1,\ldots, n)$ be 
$\overline{\mathbf Q}_\ell$-representations of degree $2$ unramified on $U_i$ punctually $\iota$-pure of weight $0$, and with both the arithmetic and the geometric monodromy groups being $\mathrm{SL}(2)$. Suppose furthermore that for any non-negative integers $k_1,\ldots, k_n$ which are not simultaneously zero,
the representation $\mathrm{Sym}^{k_1}(\rho_1)\otimes \cdots\otimes\mathrm{Sym}^{k_n}(\rho_n)$ has 
neither geometric invariant nor geometric coinvariant. Given subintervals $I_1,\ldots,I_n$ of $[0, \pi]$, let 
\begin{align*}
\Pi_d(U_1\cap \cdots\cap U_n)&=\{\mathfrak p\in U_1\cap\cdots\cap U_n: \,\mathrm{deg}(\mathfrak p)=d\},\\
\Pi_d(\rho_1,\ldots,\rho_n,I_1,\ldots, I_n)&=\{\mathfrak p\in \Pi_d(U_1\cap\cdots\cap U_n):\,
\theta_{1, \mathfrak p}\in I_1, \ldots, \theta_{n, \mathfrak p}\in I_n\}.
\end{align*}
Then we have 
\begin{align*}
\frac{|\Pi_d(\rho_1,\ldots, \rho_n,I_1, \ldots, I_n)|}{|\Pi_d(U_1\cap\cdots\cap U_n)|}=\mu_{\mathrm{ST}}(I_1)\cdots\mu_{\mathrm{ST}}(I_n)
+O(q^{-\frac{d}{2(n+1)}}\mathfrak N^{\frac{1}{n+1}}),
\end{align*}
where the implied $O$-constant is absolute and $\mathfrak N$ is as in Proposition $\ref{prop:estimate_joint}.$
\end{theorem}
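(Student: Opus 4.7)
The plan is to run the two-dimensional analogue of the proof of Theorem \ref{thm:STinSc}. First, apply Lemma \ref{lm:2dNiederreiter} with $\Lambda = \Pi_d(U_1\cap U_2)$ and data $(\theta_{1,\mathfrak p},\theta_{2,\mathfrak p})$ to reduce the discrepancy
\[
\frac{|\Pi_d(\rho_1,\rho_2,I_1,I_2)|}{|\Pi_d(U_1\cap U_2)|}-\mu_{\mathrm{ST}}(I_1)\mu_{\mathrm{ST}}(I_2)
\]
to a weighted sum of the mixed Frobenius moments $\bigl|\sum_{\mathfrak p\in\Pi_d(U_1\cap U_2)}\mathrm{Sym}^{k_1}(\theta_{1,\mathfrak p})\mathrm{Sym}^{k_2}(\theta_{2,\mathfrak p})\bigr|$, where $0\le k_1,k_2\le L$, $(k_1,k_2)\ne(0,0)$, weighted by $1/((k_1+1)(k_2+1))$, plus a truncation error of $O(1/L)$.

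Next, for each such pair $(k_1,k_2)$, the hypotheses of Proposition \ref{prop:estimate_joint} are in force: $\iota$-purity of weight $0$ is inherited by $\mathrm{Sym}^{k_1}(\rho_1)\otimes\mathrm{Sym}^{k_2}(\rho_2)$ from $\rho_1$ and $\rho_2$, and the no-(co)invariants condition is exactly what is assumed. The proposition then bounds each moment by $O(q^{d/2}\mathfrak N(k_1+1)(k_2+1)/d)$, and the factor $(k_1+1)(k_2+1)$ is precisely cancelled by the Niederreiter weights. Summing over the $\asymp L^2$ pairs, and using Lemma \ref{lm:PNT} together with the trivial estimate on the number of missing places of degree $d$ in $X\setminus(U_1\cap U_2)$ (which is $O(N/d)$, negligible in the non-trivial regime $\mathfrak N = o(q^{d/2})$), gives $|\Pi_d(U_1\cap U_2)|\gg q^d/d$ and a discrepancy of $\ll L^2\mathfrak N q^{-d/2}+L^{-1}$. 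Optimizing by taking $L$ to be the integer closest to $q^{d/6}\mathfrak N^{-1/3}$ balances the two terms at $O(q^{-d/6}\mathfrak N^{1/3})$, which is the claimed bound.

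The genuinely substantive point — rather than any single difficult step in the argument above — is the uniformity in $(k_1,k_2)$ of the input to Proposition \ref{prop:estimate_joint}, which is why the hypothesis on geometric invariants and coinvariants is imposed for every bidegree $(k_1,k_2)\ne(0,0)$. For a single representation with $\mathrm{SL}_2$ monodromy, the absence of geometric invariants in $\mathrm{Sym}^k$ for every $k\ge 1$ is automatic from the irreducibility of the nontrivial symmetric powers of the standard representation of $\mathrm{SL}_2$; for the joint system, the same property must be imposed to exclude hidden geometric relations between $\rho_1$ and $\rho_2$. Verifying this hypothesis in the Kloosterman setting, as needed for the subsequent Theorem \ref{thm:jointdistKl}, is the genuinely nontrivial input, to be handled by Goursat--Kolchin--Ribet-type arguments.
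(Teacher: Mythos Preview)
Your proof is correct and follows essentially the same route as the paper: apply Lemma~\ref{lm:2dNiederreiter}, bound each mixed moment via Proposition~\ref{prop:estimate_joint}, use Lemma~\ref{lm:PNT} to control $|\Pi_d(U_1\cap U_2)|$, and optimize in $L$. One small aside: your closing remark that the verification for Theorem~\ref{thm:jointdistKl} proceeds by Goursat--Kolchin--Ribet-type arguments is not how the paper does it; there the non-isomorphism of $\mathrm{Sym}^{k_1}(\rho_1)$ and $\mathrm{Sym}^{k_2}(\rho_2)$ is established by comparing local ramification (tame versus unramified/wild) at a zero of $a_1$ that is not a zero of $a_2$, which is simpler and more direct in this setting.
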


\begin{proof} By Lemma \ref{lm:2dNiederreiter}, for any positive integer $L$, we have 
\begin{align*}
&\ \ \ \ \frac{|\Pi_d(\rho_1,\ldots, \rho_n,I_1, \ldots,I_n)|}{|\Pi_d(U_1\cap\cdots\cap U_n)|}-\mu_{\mathrm{ST}}(I_1)\cdots\mu_{\mathrm{ST}}(I_n)\\
&\ll\frac{1}{|\Pi_d(U_1\cap\cdots\cap U_n)|} \mathop{\sum\sum}_{\substack{0\leq k_1,\ldots, k_n\le L\\(k_1, 
\ldots,k_n)\not=(0,\ldots,0)}}
\frac{1}{(k_1+1)\cdots(k_n+1)}\\
&\qquad\qquad\qquad\qquad\qquad\qquad 
\times \Big|\sum_{\mathfrak p \in U_1\cap \cdots\cap U_n,\,\mathrm{deg}(\mathfrak p)=d} 
\mathrm{Sym}^{k_1}(\theta_{1,\mathfrak p})\cdots
\mathrm{Sym}^{k_n}(\theta_{n,\mathfrak p})\Big|+ \frac{1}{L}.
\end{align*} 
Combining with Proposition \ref{prop:estimate_joint}, we get
\begin{align*}
\frac{|\Pi_d(\rho_1,\ldots, \rho_n,I_1,\ldots, I_n)|}{|\Pi_d(U_1\cap\cdots\cap U_n)|}-\mu_{\mathrm{ST}}(I_1)\cdots\mu_{\mathrm{ST}}(I_n)&\ll
\frac{1}{|\Pi_d(U_1\cap\cdots\cap U_n)|} \frac{q^{d/2}}{d}\mathfrak N L^n
+\frac{1}{L}.
\end{align*}
By Lemma \ref{lm:PNT}, for large $d$, we have 
\begin{align*}
|\Pi_d(U_1\cap \cdots\cap U_d)|&=\frac{q^d}{d}\Big(1+O(q^{-d/2})\Big)\gg\frac{q^d}{d}.
\end{align*}
We complete the proof of Theorem \ref{thm:jointdist} by taking $L$ to be the integer closest to $(q^{-\frac{d}{2}} \mathfrak N)^{-\frac{1}{n+1}}$.
\end{proof}

\subsection{Proof of Theorem \ref{thm:jointdistKl}} 
Denote by $\rho_i$ the representations of $\mathrm{Gal}(\overline K/K)$ associated to the sheaves $a_i^*\mathrm{Kl}(1/2)$ for $i=1,\ldots, n$.
We apply Theorem \ref{thm:jointdist}. In order to do this, we need to verify that for any nonnegative integers $k_1,\ldots, k_n$ not simultaneously zero, $\mathrm{Sym}^{k_1}(\rho_1)\otimes\cdots \mathrm{Sym}^{k_n}(\rho_n)$ has
neither geometric invariant nor geometric coinvariant. Let $k_{i_1},\ldots, k_{i_m}$ be those strictly positive integers among $k_1,\ldots, k_n$.
By \cite[4.1.3 and 4.1.4]{Ka88}, $\rho_{i_1}$ is geometrically self dual, that is, as a representation of $G:=\mathrm{Gal}(\overline K/K\overline{\mathbf F}_q)$,  $\rho_{i_1}$ is isomorphic to its contragredient representation $\rho_{i_1}^\vee$. So we have 
\begin{align*}
\Big(\mathrm{Sym}^{k_{i_1}}(\rho_{i_1})\otimes \cdots\otimes\mathrm{Sym}^{k_{i_m}}(\rho_{i_m})\Big)^G
&\cong \Big(\mathrm{Sym}^{k_{i_1}}(\rho_{i_1})^\vee\otimes \mathrm{Sym}^{k_{i_2}}(\rho_{i_2})\otimes \cdots\otimes\mathrm{Sym}^{k_{i_m}}(\rho_{i_m})\Big)^G\\
&\cong \mathrm{Hom}_G\Big(\mathrm{Sym}^{k_{i_1}}(\rho_{i_1}), \mathrm{Sym}^{k_{i_2}}(\rho_{i_2})\otimes \cdots\otimes\mathrm{Sym}^{k_{i_m}}(\rho_{i_m})\Big).
\end{align*}
By \cite[Sommes trig.7.8 (iii)] {De77}, the Kloosterman sheaf $\mathrm{Kl}$ is tamely ramified at $0$ with a unipotent monodromy of a single Jordan block. Outside $0$, the Kloosterman sheaf is either unramified (at finite places) or wildly ramified (at $\infty$). Choose a place $\mathfrak p$ of $K$ which is a zero of $a_{i_1}$ but not a zero of $a_{i_2},\ldots, a_{i_m}$. Then $\rho_{i_1}$ has nontrivial tame ramification at $\mathfrak p$, but $\rho_{i_2},\ldots, \rho_{i_m}$ are either unramified or wildly ramified at $\mathfrak p$. So 
$\mathrm{Sym}^{k_{i_1}}(\rho_{i_1})$ and $\mathrm{Sym}^{k_{i_2}}(\rho_{i_2})\otimes \cdots\otimes\mathrm{Sym}^{k_{i_m}}(\rho_{i_m})$ are not isomorphic as representations of $\mathrm{Gal}(\overline K/K\overline{\mathbf F}_q)$. By Schur's lemma, we have 
$$\mathrm{Hom}_G\Big(\mathrm{Sym}^{k_{i_1}}(\rho_{i_1}), \mathrm{Sym}^{k_{i_2}}(\rho_{i_2})\otimes \cdots\otimes\mathrm{Sym}^{k_{i_m}}(\rho_{i_m})\Big)=0.$$ So $\mathrm{Sym}^{k_{i_1}}(\rho_{k_{i_1}})\otimes\cdots \otimes\mathrm{Sym}^{k_{i_m}}(\rho_{k_{i_m}})$ has no geometric invariant. Since this representation of $G$ is semisimple, it has no geometric coninvariant.
This completes the proof of Theorem \ref{thm:jointdistKl}.

\appendix

\section{An effective Chebotarev density theorem}

A consequence of the Riemann Hypothesis for curves over finite fields is an effective version of the Chebotarev density theorem for function fields, which can be regarded as an analogue of the work of Lagarias and Odlyzko \cite{LO77} for number fields. This was identified in \cite{MS94} and we provide our self-contained exposition below.

\begin{theorem}[Effective Chebotarev density theorem]\label{thm:effChebo}
Let $K$ be a function field of genus $g$. Let $E/K$ be a finite Galois extension such that $E$ has the same constant field $\mathbf F_q$ as $K$, $\mathfrak D_{E/K}$ its discriminant,  
$C$ a conjugacy class in $\mathrm{Gal}(E/K)$, $\Pi_m(K)$ the set of places $\mathfrak p$ of $K$ 
with $\mathrm{deg}(\mathfrak p)=m$, $\Pi_m(E/K)$ the subset of all places $\mathfrak p$ in $\Pi_m(K)$
unramified in $E$ , and 
\begin{align*}
\Pi_m(E/K,C)
:= \{\mathfrak p\in \Pi_m(E/K): \mathrm{deg}(\mathfrak p)=m, \; \mathrm{Frob}_\mathfrak p=C\}.
\end{align*}
We have

(i)
\begin{align*}
\Big| |\Pi_m(E/K,C)|&-\frac{|C|}{[E:K]}|\Pi_m(E/K)|\Big|
\leqslant |C|\frac{q^{m/2}}{m}\Big(6g+1+3\deg(\fD_{E/K})\Big),
\end{align*} 

(ii) 
\begin{align*}
\Big| |\Pi_m(E/K,C)|&-\frac{|C|}{[E:K]}|\Pi_m(K)|\Big|
\leqslant |C|\frac{q^{m/2}}{m}\Big(6g+1+4\deg(\fD_{E/K})\Big),
\end{align*}

(iii) 
\begin{align*}
\Big||\Pi_m(E/K,C)|-\frac{|C|}{[E:K]}\frac{q^m}{m}\Big|\leq  |C|\frac{q^{m/2}}{m}\Big(12g+5+4\deg(\fD_{E/K})\Big).
\end{align*}
\end{theorem}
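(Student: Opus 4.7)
The plan is to combine orthogonality on $G:=\Gal(E/K)$ with the sum-of-traces estimate of Proposition~\ref{prop:Frobeinus-keyestimate}(i), and to collapse the resulting character sums via the conductor--discriminant formula. Using
$$\mathbf 1_C(g)=\frac{|C|}{[E:K]}\sum_{\chi\in\widehat G}\overline{\chi(C)}\chi(g),$$
summed over $\fp\in\Pi_m(E/K)$ at $g=\Frob_\fp$, the trivial character yields the main term $\tfrac{|C|}{[E:K]}|\Pi_m(E/K)|$, reducing~(i) to bounding $\sum_\fp \chi(\Frob_\fp)$ for non-trivial irreducible $\chi$.

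For each such $\chi$, I would view it as a Galois representation $\sigma_\chi:\Gal(\overline K/K)\to \mathrm{GL}(W_\chi)$ factoring through the finite group $G$, unramified on $U:=X-\supp\fD_{E/K}$, and apply Proposition~\ref{prop:Frobeinus-keyestimate}(i). The structural hypotheses are easily checked: $\sigma_\chi$ is punctually $\iota$-pure of weight $0$ since it has finite image, and it has neither geometric invariant nor coinvariant because the assumption that $E$ and $K$ have the same constant field $\bF_q$ forces $E\cap K\overline\bF_q=K$, so that the restriction $\Gal(\overline K/K\overline\bF_q)\to G$ remains surjective, keeping $\sigma_\chi$ irreducible and non-trivial on the geometric Galois group. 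This gives, with $a(\chi)$ the Artin conductor of $\sigma_\chi$ and $\omega=|\supp\fD_{E/K}|$,
$$\Big|\sum_{\fp\in \Pi_m(E/K)}\iota\chi(\Frob_\fp)\Big|\leqslant \frac{q^{m/2}}{m}\bigl((6g+1)\chi(1)+a(\chi)\bigr)+\omega\,\chi(1).$$

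The assembly is then combinatorial. Weighting by $|\overline{\chi(C)}|\leqslant \chi(1)$ and summing over non-trivial $\chi$, I use $\sum_{\chi}\chi(1)^2=[E:K]$ to control the $(6g+1)$-piece, the conductor--discriminant formula $\sum_{\chi}\chi(1)a(\chi)=\deg\fD_{E/K}$ to control the conductor piece, and $\omega\leqslant \deg\fD_{E/K}$ together with the elementary inequality $1\leqslant 2q^{m/2}/m$ (valid for all $q\geqslant 2$, $m\geqslant 1$) to absorb the ramified-place term into a factor of $q^{m/2}/m$; collecting constants then gives the coefficient $3\deg\fD_{E/K}$ of part~(i). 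Part~(ii) follows at once since the number of ramified places of $K$ of degree $m$ is at most $\omega\leqslant \deg\fD_{E/K}$, so $\bigl||\Pi_m(K)|-|\Pi_m(E/K)|\bigr|\leqslant \deg\fD_{E/K}$; and~(iii) follows from~(ii) by substituting the prime-number-theorem estimate $\bigl||\Pi_m(K)|-q^m/m\bigr|\leqslant (6g+4)q^{m/2}/m$ of Lemma~\ref{lm:PNT} and rebalancing constants.

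The only non-routine point is verifying the geometric non-triviality of $\sigma_\chi$ that is required to invoke Proposition~\ref{prop:Frobeinus-keyestimate}(i); this is precisely where the hypothesis that $E$ and $K$ share the same constant field is used. Once this is in hand, the rest of the argument is bookkeeping with the conductor--discriminant formula, the trivial estimate on $\omega$, and the elementary inequality above.
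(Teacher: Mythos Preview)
Your proposal is correct and follows essentially the same route as the paper: orthogonality on $G$ (the paper's Lemma~\ref{average}), application of Proposition~\ref{prop:Frobeinus-keyestimate}(i) to each nontrivial irreducible $\chi$, and collapsing via $\sum_\chi\chi(1)^2=[E:K]$ and the conductor--discriminant identity $\sum_{\chi\neq 1}\chi(1)a(\chi)=\deg\fD_{E/K}$ (the paper's Lemma~\ref{Hurwitz}). Your explicit verification that the same-constant-field hypothesis forces surjectivity of $\Gal(\overline K/K\overline{\bF}_q)\to G$, hence geometric irreducibility of $\sigma_\chi$, is exactly the point the paper uses implicitly. The only cosmetic difference is in part~(ii): the paper uses the sharper bound $|\Pi_m(K)|-|\Pi_m(E/K)|\leqslant \deg(\fD_{E/K})/m$ (each ramified place of degree $m$ contributes at least $m$ to $\deg\fD_{E/K}$), whereas you use $\leqslant\omega\leqslant\deg\fD_{E/K}$; your version still yields the stated constant once the prefactor $|C|/[E:K]\leqslant 1/2$ and the inequality $1\leqslant 2q^{m/2}/m$ are invoked.
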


Write $G$ for $\mathrm{Gal}(E/K)$. Since $G$ is a finite group, we may identify $\ell$-adic representations with complex representations of $G$. Denote by $\widehat G$ the set of irreducible representations of $G$. For $\rho \in \widehat G$, let $\chi_\rho = \mathrm{Tr}(\rho)$. 
Consider the character sum
$$T_\rho(m)
:=\sum_{\fp\not\in\mathrm{supp}\,\mathfrak D_{E/K},\,\mathrm{deg}(\mathfrak p) = m}  \chi_\rho(\mathrm{Frob}_{\fp}).$$
The orthogonality of $\chi_\rho$ for $\rho \in \widehat G$ allows us to express $|\Pi_m(E/K,C)|$ as a linear combination of $T_\rho(m).$

\begin{lemma}\label{average} 
For any conjugacy class $C$ in $G$, we have 
\begin{align*}
|\Pi_m(E/K,C)|&=\frac{|C|}{[E:K]} \sum_{\rho \in \widehat{G}}\bar\chi_\rho(C)T_\rho(m).
\end{align*}
\end{lemma}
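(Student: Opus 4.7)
The plan is to prove this identity by a direct application of the orthogonality relations for characters of the finite group $G=\mathrm{Gal}(E/K)$. The right-hand side is essentially a Fourier expansion (on the space of class functions on $G$) of the indicator function of the conjugacy class $C$, evaluated at Frobenius elements.

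First I would recall the standard projection formula: for any conjugacy class $C\subseteq G$ and any element $g\in G$, the indicator function $\mathbf 1_C$ of $C$ admits the expansion
\begin{equation*}
\mathbf 1_C(g)=\frac{|C|}{|G|}\sum_{\rho\in\widehat G}\bar\chi_\rho(C)\,\chi_\rho(g).
\end{equation*}
This is a straightforward consequence of the second orthogonality relation for irreducible characters (columns of the character table are orthogonal), together with the fact that $\chi_\rho$ and $\mathbf 1_C$ are class functions.

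Next, I would use the fact that for each place $\fp$ of $K$ unramified in $E$, the Frobenius conjugacy class $\mathrm{Frob}_\fp$ is a well-defined element of the set of conjugacy classes of $G$, and $\chi_\rho(\mathrm{Frob}_\fp)$ is well-defined since $\chi_\rho$ is a class function. Writing
\begin{equation*}
|\Pi_m(E/K,C)|=\sum_{\substack{\fp\not\in\mathrm{supp}\,\mathfrak D_{E/K}\\ \deg(\fp)=m}}\mathbf 1_C(\mathrm{Frob}_\fp),
\end{equation*}
substituting the projection formula above, and interchanging the order of summation, I obtain
\begin{equation*}
|\Pi_m(E/K,C)|=\frac{|C|}{|G|}\sum_{\rho\in\widehat G}\bar\chi_\rho(C)\sum_{\substack{\fp\not\in\mathrm{supp}\,\mathfrak D_{E/K}\\ \deg(\fp)=m}}\chi_\rho(\mathrm{Frob}_\fp)=\frac{|C|}{[E:K]}\sum_{\rho\in\widehat G}\bar\chi_\rho(C)T_\rho(m),
\end{equation*}
using $|G|=[E:K]$ and the definition of $T_\rho(m)$.

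There is no genuine obstacle here: the lemma is purely formal and relies only on finite group representation theory. The only ``care'' point is to note that $\mathrm{Frob}_\fp$ is merely a conjugacy class (not a canonical element of $G$), but this is harmless since the only quantities evaluated at $\mathrm{Frob}_\fp$ are class functions $\chi_\rho$ and $\mathbf 1_C$. The lemma will then be used, in combination with bounds on the individual $T_\rho(m)$ coming from the Grothendieck--Lefschetz trace formula and the Weil conjectures, to deduce the quantitative Chebotarev estimates in Theorem~\ref{thm:effChebo}.
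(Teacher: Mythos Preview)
Your proof is correct and is exactly the argument the paper has in mind: the paper does not write out a proof of this lemma at all, merely remarking that ``the orthogonality of $\chi_\rho$ for $\rho\in\widehat G$ allows us to express $|\Pi_m(E/K,C)|$ as a linear combination of $T_\rho(m)$.'' Your write-up spells out precisely this orthogonality computation.
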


\begin{lemma}\label{Hurwitz}
We have 
$$\sum_{\rho \in \hat G}\mathrm{dim}(\rho)^2=[E:K],\quad
\sum_{\rho\not=1,\rho \in \widehat G}\mathrm{dim}(\rho)a(\rho)=\mathrm{deg}(\mathfrak D_{E/K}),$$
where $a(\rho)$ is as in \eqref{eq:Artinconductor}.
\end{lemma}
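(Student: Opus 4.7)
The plan is to derive both identities from the decomposition of the regular representation $\mathrm{Reg}_G$ of the finite group $G := \mathrm{Gal}(E/K)$ into irreducibles, combined with the additivity and inductivity properties of the Artin conductor.

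For the first identity, I would invoke the classical decomposition
$$\mathrm{Reg}_G \cong \bigoplus_{\rho \in \widehat G} \rho^{\oplus \dim(\rho)},$$
in which each irreducible occurs with multiplicity equal to its dimension. Taking dimensions on both sides gives $|G| = \sum_{\rho \in \widehat G}\dim(\rho)^2$, and since $|G| = [E:K]$ this yields the first identity immediately.

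For the second identity, the main tool is the conductor-discriminant formula. First I would observe that the Artin conductor is additive on direct sums of Galois representations: in the definition \eqref{eq:Artinconductor}, both $\dim V-\dim V^{I_{\mathfrak p}}$ and $\mathrm{sw}_{\mathfrak p}(\rho)$ are additive under direct sums. Applied to the regular-representation decomposition above, this gives
$$a(\mathrm{Reg}_G)=\sum_{\rho\in \widehat G}\dim(\rho)\,a(\rho)=\sum_{\substack{\rho\in \widehat G\\ \rho\neq \mathbf 1}}\dim(\rho)\,a(\rho),$$
since the trivial representation is everywhere unramified and hence has $a(\mathbf 1)=0$. It remains to identify $a(\mathrm{Reg}_G)$ with $\deg \mathfrak D_{E/K}$. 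For this, I would use $\mathrm{Reg}_G \cong \mathrm{Ind}_{\{1\}}^G \mathbf 1$ together with Artin's formula for the conductor of an induced representation,
$$\mathfrak f\bigl(\mathrm{Ind}_H^G \sigma\bigr) = \mathfrak D_{E^H/K}^{\dim \sigma}\cdot \mathrm N_{E^H/K}\bigl(\mathfrak f(\sigma)\bigr),$$
applied with $H=\{1\}$ and $\sigma=\mathbf 1$, so that $E^H=E$ and $\mathfrak f(\sigma)$ is trivial. This yields $\mathfrak f(\mathrm{Reg}_G) = \mathfrak D_{E/K}$, and taking degrees gives the claim.

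The only non-routine ingredient is Artin's conductor formula for induced representations, but this is a classical statement that transfers verbatim from the number-field to the function-field setting: it follows from the local analysis of the behavior of higher ramification groups under subgroups and the corresponding identity for Swan conductors (see, e.g., the local theory used in Katz's work cited above). No further input beyond these standard facts is required, and once they are in place the lemma is essentially a bookkeeping statement about the regular representation.
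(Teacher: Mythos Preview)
Your argument is correct, but it follows a different route from the paper's for the second identity. The paper proceeds via $L$-functions and the Riemann--Hurwitz formula: it uses the factorisation of the $\zeta$-function of the covering curve $X'$ as $\prod_{\rho}L(\rho,t)^{\dim\rho}$, computes the degree of each factor as $(2g-2)\dim\rho + a(\rho)$ (via Grothendieck--Ogg--Shafarevich), and then compares the resulting expression for $2g'-2$ with the Riemann--Hurwitz identity $2g'-2 = (2g-2)[E:K] + \deg\mathfrak D_{E/K}$; the first identity drops out of the same comparison. This explains the lemma's name. Your approach instead invokes the conductor--discriminant formula directly, via the additivity of the Artin conductor and Artin's induction formula applied to $\mathrm{Reg}_G = \mathrm{Ind}_{\{1\}}^G\mathbf 1$. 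Your route is shorter and avoids any genus computation, but it treats as a black box precisely the identity the paper is in effect re-deriving; the paper's route, while more circuitous, stays closer to the $L$-function framework used throughout and makes the dependence on Riemann--Hurwitz explicit.
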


\begin{proof} Let $X'$ be the normalization in $E$ of the smooth projective curve $X$ with the function field $K$. 
Then the $\zeta$-function of $X'$ is equal to
$$\prod_{\rho \in \widehat G}L(\rho, t)^{\dim(\rho)}$$ 
since it is equal to the $L$-function of the regular representation of 
$G$, and each irreducible representation $\rho$ of  $G$ appears in the regular 
representation with multiplicity $\mathrm{dim}(\rho)$. The degree of the $\zeta$-function of $X'$ is 
$2g'-2$, where $g'$ is the genus of $X'$. 
Counting the degree of $\prod_{\rho}L(\rho, t)^{\mathrm{dim}(\rho)}$, we get
$$2g'-2=\sum_\rho((2g-2)\mathrm{dim}(\rho)+a(\rho))\mathrm{dim}(\rho)=(2g-2)\sum_\rho\mathrm{dim}(\rho)^2+
\sum_{\rho\not=1}a(\rho)\mathrm{dim}(\rho).$$ Here we use the fact that $a(\rho)=0$ for the trivial representation. 
Counting the dimension of the regular representation decomposed as the direct sum of irreducible representations $\rho$ 
with multiplicities $\mathrm{dim}(\rho)$, we get
$$\sum_\rho\mathrm{dim}(\rho)^2=[E:K].$$ 
By the Hurwitz formula, we have 
$$2g'-2=(2g-2)[E:K]+\mathrm{deg}(\mathfrak D_{E/K}),$$ 
from which our assertion follows. 
\end{proof} 

\subsection*{Proof of Theorem \ref{thm:effChebo}}
By Lemma \ref{average}, we have
\begin{eqnarray}\label{delta}
|\Pi_m(E/K,C)|-\frac{|C|}{[E:K]}|\Pi_m(E/K)|=\frac{|C|}{[E:K]} \sum_{\rho\not=1, \rho\in \hat G }\bar \chi_\rho(C)T_\rho(m).
\end{eqnarray} 
Any nontrivial irreducible representation $\rho$ has neither geometric invariant
nor geometric coinvariant. Moreover, $\rho$ is punctually $\iota$-pure of weight $0$, and unramified on $X-\mathrm{supp}\,\mathfrak D_{E/K}$.
From Proposition \ref{prop:Frobeinus-keyestimate}, it follows that
\begin{align}\label{T}
|T_\rho(m) |\leq
\frac{q^{m/2}}m \Big((6g+1)\dim(\rho)+a(\rho)\Big)+\omega_{E/K}\dim(\rho),
\end{align} where $\omega_{E/K}$ is the number of distinct places in $\mathrm{supp}\,\mathfrak D_{E/K}$. Note that $\omega_{E/K}\le 
\mathrm{deg}(\mathfrak D_{E/K})$.
From the equation (\ref{delta}), the inequality (\ref{T}) and Lemma \ref{Hurwitz}, we get
\begin{eqnarray*}
&& ||\Pi_m(E/K,C)|-\frac{|C|}{[E:K]}|\Pi_m(E/K)||\\
&\leqslant& \frac{|C|}{[E:K]}  \sum_{\rho\not=1}\dim(\rho)\frac{q^{m/2}}m\Big((6g+1)\dim(\rho)+a(\rho)\Big)
+\frac{|C|}{[E:K]}\omega_{E/K}\sum_{\rho\not=1}\dim(\rho)^2\\
&=&\frac{|C|}{[E:K]}  \frac{q^{m/2}}m\Big((6g+1)([E:K]-1)+\mathrm{deg}(\mathfrak D_{E/K})\Big)+\frac{|C|}{[E:K]}\omega_{E/K}([E:K]-1)\\
&\le& |C| \frac{q^{m/2}}m \Big(6g+1+3\mathrm{deg}(\mathfrak D_{E/K})\Big).
\end{eqnarray*}
This proves (i).
(ii) follows from (i) and the fact that 
\begin{align*}
0\leq |\Pi_m(K)|-\Pi_m(E/K)|\leq\frac{\deg(\fD_{E/K})}{m}.
\end{align*}
(iii) follows from (ii) and Lemma \ref{lm:PNT}.

\bibliographystyle{plainnat}

\end{document}